\date{05 March 2013}
\theoremstyle{plain}  
\newtheorem{theorem}{Theorem}[section]
\newtheorem*{theorem*}{Theorem}
\newtheorem{corollary}[theorem]{Corollary}
\newtheorem{lemma}[theorem]{Lemma}
\newtheorem{proposition}[theorem]{Proposition}
\newtheorem{tech-lemma}[theorem]{Technical Lemma}
\newtheorem{definition}[theorem]{Definition}
\theoremstyle{remark}
\newtheorem{remark}[theorem]{Remark}
\newtheorem*{remark*}{Remark}
\newtheorem*{claim*}{Claim}
\numberwithin{equation}{section}
\renewcommand{\leq}{\leqslant}
\renewcommand{\geq}{\geqslant}
\newcommand{\RR}{\mathbb{R}}
\newcommand{\Z}{\mathbb{Z}}
\newcommand{\CC}{\mathbb{C}}
\newcommand{\HH}{\mathbb{H}}
\newcommand{\M}{{\mathcal M}}
\newcommand{\N}{{\mathcal N}}
\newcommand{\U}{\mathrm{U}}
\newcommand{\GL}{\mathrm{GL}}
\newcommand{\Sp}{\mathrm{Sp}}
\newcommand{\st}{\;|\;}
\DeclareMathOperator{\ad}{ad}
\DeclareMathOperator{\Ad}{Ad}
\DeclareMathOperator{\Aut}{Aut}
\DeclareMathOperator{\rk}{rk}
\DeclareMathOperator{\Hom}{Hom}
\DeclareMathOperator{\End}{End}
\newcommand{\liem}{\mathfrak{m}}
\newcommand{\liemc}{\mathfrak{m}^{\mathbb{C}}}
\newcommand{\lieh}{\mathfrak{h}}
\newcommand{\liehc}{\mathfrak{h}^{\mathbb{C}}}
\newcommand{\lieg}{\mathfrak{g}}
\newcommand{\liegc}{\mathfrak{g}^{\mathbb{C}}}
\let\oldmarginpar\marginpar
\renewcommand\marginpar[1]{\oldmarginpar{\tiny\bf\begin{flushleft} #1
\end{flushleft}}}
\begin{document}

%
%

\title[Connectednes of the moduli of $\Sp(2p,2q)$-Higgs bundles]
{Connectedness of the moduli of $\Sp(2p,2q)$-Higgs bundles}
%
%

\author[O. García-Prada]{Oscar García-Prada}
\address{O. García-Prada\\ Instituto de Ciencias Matemáticas\\
CSIC-UAM-UC3M-UCM\\
Calle Nicol\'as Cabrera 15\\
28049 Madrid\\
Spain.}
\email{oscar.garcia-prada@icmat.es}

\author[A. G. Oliveira]{André G. Oliveira}
\address{A. G. Oliveira\\ Departamento de Matemática\\
  Universidade de Trás-os-Montes e Alto Douro\\
  Quinta dos Prados\\ 5000-911 Vila Real \\ Portugal.}
\email{agoliv@utad.pt}

\thanks{%
First author partially supported by the Ministerio de Econom\'{\i}a y
Competitividad of Spain through Project MTM2010-17717 and Severo Ochoa
Excellence Grant.
  Second author partially supported the FCT (Portugal) with national funds 
through the projects PTDC/MAT/099275/2008, PTDC/MAT/098770/2008 and PTDC/MAT/120411/2010 and through Centro de Matem\'atica da
Universidade de Tr\'as-os-Montes e Alto Douro (PEst-OE/MAT/UI4080/2011).
Both authors thank the Centre de Recerca Matematica in Barcelona - that they visited while preparing the paper - for the excellent conditions provided.}
\keywords{Semistable Higgs bundles, connected components of moduli spaces}
\subjclass[2010]{14D20, 14F45, 14H60}

\begin{abstract}
Using the Morse-theoretic techniques introduced by Hitchin, we prove that 
the moduli space of $\Sp(2p,2q)$-Higgs bundles over a compact Riemann surface 
of genus $g\geq 2$ is connected. In particular, this implies that the
moduli space of representations of the fundamental group of the surface 
in $\Sp(2p,2q)$ is connected. 
\end{abstract}

\maketitle

%
%

\section{Introduction}\label{section:Introduction}

Let $X$ be a compact Riemann surface, and let
$\M_G$ be the moduli space of polystable $G$-Higgs bundles over $X$, 
where $G$ is a real reductive Lie group. Higgs bundles were 
first introduced by Nigel Hitchin in \cite{hitchin:1987} to be a pair $(V,\varphi)$ consisting of a holomorphic bundle $V$ over $X$ 
and a holomorphic section $\varphi$ of the bundle $\End V$ twisted with the
canonical bundle of $X$. This notion was then generalised to that of a $G$-Higgs bundle \cite{hitchin:1992,bradlow-garcia-prada-gothen:2005}, so that Hitchin's original definition is a $\GL(n,\CC)$-Higgs bundle.
In this paper we study the moduli space $\M_{\Sp(2p,2q)}$,
where $\Sp(2p,2q)$ is the real form of $\Sp(2p+2q,\CC)$ defined by the involution $M\mapsto K_{p,q}{M^*}^{-1}K_{p,q}$ on $\Sp(2p+2q,\CC)$, where 
$$K_{p,q}=\begin{pmatrix}
     -I_p & 0 & 0 & 0 \\
     0 & I_q & 0 & 0 \\
     0 & 0 & -I_p & 0 \\
     0 & 0 & 0 & I_q
\end{pmatrix},$$ $I_p$ and $I_q$ being the identity matrices of the given type.

In this paper we prove  the following.
\begin{theorem*}
Suppose that $X$ has genus at least $2$. The moduli space $\M_{\Sp(2p,2q)}$ of $\Sp(2p,2q)$-Higgs bundles over $X$ is connected.
\end{theorem*}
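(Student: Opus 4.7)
My plan is to implement Hitchin's Morse-theoretic strategy from \cite{hitchin:1987}. Equip $\M_{\Sp(2p,2q)}$ with the smooth function $f(E,\varphi)=\tfrac{1}{2}\|\varphi\|_{L^2}^2$, which is the moment map for the circle action $(E,\varphi)\mapsto (E,e^{i\theta}\varphi)$. This function is non-negative and proper (for the latter one compares it with the Hitchin map, which is proper), so it attains its infimum on every connected component of the moduli space. Hence, in order to prove that $\M_{\Sp(2p,2q)}$ is connected it suffices to show that the subvariety $\N\subset\M_{\Sp(2p,2q)}$ of local minima of $f$ is connected.

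The central task is to identify $\N$ explicitly. Smooth critical points of $f$ are complex variations of Hodge structure, i.e.\ Higgs bundles carrying a $\CC^*$-action, and being a local minimum imposes an extremality condition on the Hodge weights. An $\Sp(2p,2q)$-Higgs bundle can be encoded by a pair $(V,W)$ of symplectic bundles of ranks $2p$ and $2q$ together with sections $\beta\colon W\to V\otimes K$ and $\gamma\colon V\to W\otimes K$ that are adjoint with respect to the symplectic forms; equivalently, the Higgs field takes values in the complexified isotropy representation $\liemc$, on which $\liehc=\mathfrak{sp}(2p,\CC)\oplus\mathfrak{sp}(2q,\CC)$ acts irreducibly since $\Sp(2p,2q)$ is not of Hermitian type. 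A weight-space analysis along the lines of \cite{bradlow-garcia-prada-gothen:2005} should force the local minima to be exactly the Higgs bundles with $\varphi=0$, i.e.\ the polystable principal $\Sp(2p,\CC)\times \Sp(2q,\CC)$-bundles.

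Connectedness of $\N$ then reduces to connectedness of the moduli of polystable principal $\Sp(2p,\CC)\times \Sp(2q,\CC)$-bundles on $X$. This is a consequence of Ramanathan's description of the connected components of the moduli of principal $G$-bundles by $\pi_1(G)$, combined with the fact that $\Sp(2n,\CC)$ is simply connected. Combining these steps gives the theorem.

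The main obstacle I expect is confining the minima to the $\varphi=0$ locus. The second variation of $f$ at a critical point with $\varphi\neq 0$ is controlled by weight-graded hypercohomology of a deformation complex on $X$; ruling out such minima will require a careful use of the $\liehc$-module structure of $\liemc$, Serre duality and the semistability of the underlying Higgs bundle. This representation-theoretic and cohomological analysis is the technical heart of the argument.
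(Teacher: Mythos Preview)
Your overall strategy matches the paper's: use properness of the Hitchin function $f$ to reduce connectedness of $\M_{\Sp(2p,2q)}$ to connectedness of the locus $\N$ of local minima, identify $\N$ with the moduli of principal $\Sp(2p,\CC)\times\Sp(2q,\CC)$-bundles (i.e.\ the $\varphi=0$ locus), and then invoke Ramanathan together with $\pi_1(\Sp(2n,\CC))=0$. The smooth-point analysis you sketch is also essentially what the paper does: at a stable and simple fixed point of the $S^1$-action the Hodge decomposition and the weight-graded deformation complex show that no local minimum has $\varphi\neq 0$.

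The genuine gap is that your second-variation / weight-space argument only applies at \emph{smooth} points of the moduli space, and you have not addressed the singular locus. Strictly polystable $\Sp(2p,2q)$-Higgs bundles are not smooth points, and the Hodge-bundle criterion you invoke does not directly apply there. The paper handles this by first showing (Theorem~\ref{thm:description of polystable}) that every strictly polystable object decomposes as a direct sum of stable $G_\alpha$-Higgs bundles for $G_\alpha\in\{\Sp(2p_\alpha,2q_\alpha),\U(p_\alpha,q_\alpha),\Sp(2n_\alpha),\U(n_\alpha)\}$, and then reducing to the minima problem on each factor. Here a new phenomenon appears: for the factor $\U(p_\alpha,q_\alpha)$ the local minima of the Hitchin function on $\M_{\U(p_\alpha,q_\alpha)}$ generally have $\beta'\neq 0$ or $\gamma'\neq 0$ (only one of them vanishes). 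Thus the induced strictly polystable $\Sp(2p_\alpha,2q_\alpha)$-Higgs bundle has nonzero Higgs field and is a \emph{candidate} local minimum that is not ruled out by the smooth analysis. The paper disposes of these candidates by an ad hoc deformation argument (Lemma~\ref{lemma:deformation}): one builds nontrivial symplectic extensions of $V'^*$ by $V'$ and of $W'$ by $W'^*$ to produce a nearby \emph{stable} $\Sp(2p_\alpha,2q_\alpha)$-Higgs bundle with nonzero Higgs field, which by the smooth-point result is not a minimum; hence the original strictly polystable object cannot be a local minimum either. Without this step your claim that $\N=\{\varphi=0\}$ is unjustified, and the paper itself flags this as the first instance in the literature where such a difficulty arises.
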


We adopt the Morse-theoretic techniques introduced by Hitchin in
\cite{hitchin:1987}, which reduce the question to the study of connectedness of certain subvarieties of $\M_{\Sp(2p,2q)}$, defined as the subvariety of local minima of the so-called Hitchin function, defined on $\M_{\Sp(2p,2q)}$. These techniques proved to be extremely efficient in the calculation of the connected components of the moduli spaces of $G$-Higgs bundles for several other groups (see, for example, \cite{hitchin:1992,bradlow-garcia-prada-gothen:2003,bradlow-garcia-prada-gothen:2005,garcia-gothen-mundet:2008 II}). In order to apply this method, we first obtain a detailed description of the smooth points of the moduli space $\M_{\Sp(2p,2q)}$ corresponding to the stable and simple $\Sp(2p,2q)$-Higgs bundles, and prove that the only local minima of the Hitchin function of this type are the ones with zero Higgs field. Then 
we show that stable and non-simple $\Sp(2p,2q)$-Higgs bundles are always given by no-trivial direct sums of stable and simple $\Sp(2p,2q)$-Higgs bundles. This implies again that the only stable and non-simple local minima of the Hitchin function must have zero Higgs field. Finally we deal with strictly polystable $\Sp(2p,2q)$-Higgs showing that they split as a direct sum of stable Higgs bundles for one of the following groups: $\Sp(2p_\alpha,2q_\alpha)$, $\U(p_\alpha,q_\alpha)$, $\Sp(2n_\alpha)$ or $\U(n_\alpha)$, where $p_\alpha\leq p$, $q_\alpha\leq q$ and $n_\alpha\leq p+q$.
Hence the question of finding all strictly polystable local minima
$\Sp(2p,2q)$-Higgs bundles amounts to the same question but for the other
given groups.  And here we are confronted to a situation, which as far as we
know, is the first time that appears in the study of the connectedness 
properties of moduli spaces of Higgs bundles using these methods.
Namely,  for the subgroup  $\U(p_\alpha,q_\alpha)\subset\Sp(2p,2q)$ the
Higgs bundles which are local minima of the corresponding Hitchin function 
have non-zero Higgs field in general. To deal with this situation, and show
that these strictly polystable objects are not local minima for $\Sp(2p,2q)$, 
we give a direct argument, providing a deformation to a stable object, 
and using the fact that stable $\Sp(2p,2q)$-Higgs bundles with non-zero Higgs field are not local minima.

For a semisimple Lie group $G$, non-abelian Hodge theory on $X$ establishes a homeomorphism between $\M_G$ and the moduli space of reductive representations of $\pi_1X$ in $G$ (cf. \cite{hitchin:1987,simpson:1988,simpson:1992,donaldson:1987,corlette:1988,garcia-gothen-mundet:2008,bradlow-garcia-prada-mundet:2003}). A direct consequence of our result is thus the following.
\begin{theorem*}
The moduli space of reductive representations of $\pi_1X$ in $\Sp(2p,2q)$ is connected.
\end{theorem*}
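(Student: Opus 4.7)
The plan is to deduce this statement immediately from the first theorem (connectedness of $\M_{\Sp(2p,2q)}$) by invoking the non-abelian Hodge correspondence on $X$. The only real content of the argument is to check that the hypotheses of that correspondence apply to the group at hand; once that is in place, connectedness transfers between the two moduli spaces through a homeomorphism, and nothing further needs to be proved.

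More concretely, I would first observe that $\Sp(2p,2q)$ is a real form of the simple complex Lie group $\Sp(2p+2q,\CC)$, hence is itself semisimple (in particular, reductive). This puts us squarely within the setting of \cite{garcia-gothen-mundet:2008, bradlow-garcia-prada-mundet:2003}, which, combining the Hitchin--Simpson correspondence between polystable $G$-Higgs bundles and solutions of Hitchin's equations with the Corlette--Donaldson theorem relating harmonic metrics to reductive representations of $\pi_1 X$, yields a homeomorphism
$$
\M_{\Sp(2p,2q)} \;\cong\; \Hom^{\mathrm{red}}\!\bigl(\pi_1 X,\,\Sp(2p,2q)\bigr)\big/\Sp(2p,2q),
$$
where the right-hand side is the moduli space of reductive representations modulo conjugation.

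Since connectedness is a topological invariant, the connectedness of $\M_{\Sp(2p,2q)}$ established in the first theorem transports directly across this homeomorphism to give connectedness of the representation variety. This completes the argument.

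The only place where care is needed, and arguably the main (very modest) obstacle, is to confirm that the general non-abelian Hodge correspondence as formulated in the cited references truly applies to $\Sp(2p,2q)$; this reduces to identifying the maximal compact subgroup and the relevant isotropy representation, and observing that $\Sp(2p,2q)$ satisfies the reductivity hypotheses required there. No new Morse-theoretic or bundle-theoretic work is involved, since all the difficulty has already been absorbed into the proof of the first theorem.
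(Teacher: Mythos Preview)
Your proposal is correct and matches the paper's own argument essentially verbatim: the paper simply states that for a semisimple Lie group $G$, non-abelian Hodge theory gives a homeomorphism between $\M_G$ and the moduli space of reductive representations of $\pi_1 X$ in $G$, and then declares the result a direct consequence of the connectedness of $\M_{\Sp(2p,2q)}$. There is nothing further to add.
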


We finally mention that the main results of this paper are consistent with the recent results independently obtained by Laura Schaposnik in her DPhil Thesis \cite{schaposnik:2012}, using other methods, namely through the study of the Hitchin map.

\section{$\Sp(2p,2q)$-Higgs bundles}\label{section:Sp(2p,2q)-Higgs bundles}

Let $X$ be a compact Riemann surface of genus $g\geq 2$, and let $G$ be a real reductive Lie group, which is a real form of some complex reductive Lie group $G^\CC$.
Let $H\subseteq G$ be a maximal compact subgroup
so that its complexification $H^\CC$ is a closed subgroup of $G^\CC$. Denote by $\lieg$ and $\lieh$ the Lie algebras of $G$ and $H$, and let
$\lieg=\lieh\oplus\liem$ be a Cartan
decomposition of $\lieg$,
where $\liem$ is the complement of $\lieh$ with
respect to a non-degenerate $\Ad(G)$-invariant bilinear form on $\lieg$. If
$\theta:\lieg\to\lieg$ is the
corresponding Cartan involution then $\lieh$ and
$\liem$ are its $+1$-eigenspace and $-1$-eigenspace,
respectively. Complexifying, we have the decomposition
$\liegc=\liehc\oplus\liemc$ and
$\liemc$ is a representation of $H^\CC$ through the
so-called \emph{isotropy representation}
$\iota:H^\CC\to\Aut(\liemc)$, induced by the adjoint
representation of $G^\CC$ on $\liegc$.
If $E_{H^\CC}$ is a principal $H^{\CC}$-bundle over $X$, we denote by
$E(\liemc)=E_{H^\CC}\times_{H^{\CC}}\liemc$ the
vector bundle, with fibre $\liemc$, associated to the
isotropy representation.

Let $K:=T^*X^{1,0}$ be the canonical line bundle of $X$.
\begin{definition}\label{definition of Higgs bundle}
A \emph{$G$-Higgs bundle} over the compact Riemann surface $X$ is a pair
$(E_{H^\CC},\varphi)$ where $E_{H^\CC}$ is a principal holomorphic $H^\CC$-bundle over
$X$ and $\varphi$ is a global holomorphic section of
$E(\liemc)\otimes K$, called the \emph{Higgs field}.
\end{definition}

Let us focus on $G=\Sp(2p,2q)$. In intrinsic terms, this is the group of quaternionic linear automorphisms of an $p+q$-dimensional vector space $V$ over the ring $\mathbb{H}$ of quaternions, which preserve a hermitian form on $V$ with signature $(2p,2q)$.

In terms of matrices, $\Sp(2p,2q)$ is the subgroup of the complex symplectic group $\Sp(2p+2q,\CC)$ defined as
$$\Sp(2p,2q):=\left\{M\in\Sp(2p+2q,\CC)\st M^*K_{p,q} M=K_{p,q}\right\},$$ where $M^*$ denotes the conjugate transpose of $M$,
$$K_{p,q}:=\begin{pmatrix}
     -I_p & 0 & 0 & 0 \\
     0 & I_q & 0 & 0 \\
     0 & 0 & -I_p & 0 \\
     0 & 0 & 0 & I_q
\end{pmatrix}$$ and $I_p$ and $I_q$ are the identity matrices of the corresponding size.
From this definition, it is obvious that $\Sp(2p,2q)$ is a real form of $\Sp(2p+2q,\CC)$ given by the fixed point set of the involution $M\mapsto K_{p,q}{M^*}^{-1}K_{p,q}$ on $\Sp(2p+2q,\CC)$. Let $\mathfrak{sp}(2p,2q)$ denote the Lie algebra of $\Sp(2p,2q)$. If $\sigma$ is the involution of the Lie algebra $\mathfrak{sp}(2p+2q,\CC)$ defining the real form $\mathfrak{sp}(2p,2q)$, then
$$\sigma(M)=-K_{p,q}M^*K_{p,q},$$
and if $\tau:\mathfrak{sp}(2p+2q,\CC)\to\mathfrak{sp}(2p+2q,\CC)$ is the involution defining the compact form, $\mathfrak{sp}(p+q)$, then 
$$\tau(M)=-M^*.$$
Since $\tau$ and $\sigma$ commute, define the Cartan involution 
$\theta:\mathfrak{sp}(2p+2q,\CC)\to\mathfrak{sp}(2p+2q,\CC)$ by
\begin{equation}\label{eq:Cartan involution}
\theta(M):=\sigma\tau(M)=K_{p,q}MK_{p,q}.
\end{equation}
The corresponding Cartan decomposition of the complex Lie algebra is 
$$\mathfrak{sp}(2p+2q,\CC)=\mathfrak{sp}(2p,\CC)\oplus\mathfrak{sp}(2q,\CC)\oplus\liemc.$$
Here $\mathfrak{sp}(2p,\CC)\oplus\mathfrak{sp}(2q,\CC)$ is the $+1$-eigenspace of $\theta$. It is the  Lie algebra of the complexification $H^\CC$ of the maximal compact subgroup $H$ of $\Sp(2p,2q)$. Of course, $H^\CC=\Sp(2p,\CC)\times\Sp(2q,\CC)$, and also $H=\Sp(2p)\times\Sp(2q)$, the product of the compact symplectic groups, which may be explicitly defined as $$\Sp(2p):=\Sp(2p,\CC)\cap\U(2p)=\{M\in\U(2p)\st \overline{M}J_n=J_nM\},$$ with $J_p:=\begin{pmatrix}
     0 & I_p \\
     -I_p & 0
\end{pmatrix}$.
On the other hand, the $-1$-eigenspace of the Cartan involution $\theta$ is $$\liemc:=\{(B,C)\in\mathrm{M}_{2p\times 2q}(\CC)\times\mathrm{M}_{2q\times 2p}(\CC)\st J_pB=-C^tJ_q\}.$$

Hence, from Definition \ref{definition of Higgs bundle}, we have that an $\Sp(2p,2q)$-Higgs bundle over $X$ is a pair $(E,\varphi)$, where $E$ is a holomorphic $\Sp(2p,\CC)\times\Sp(2q,\CC)$-principal bundle and the Higgs field $\varphi$ is a holomorphic section of $E\times_{\Sp(2p,\CC)\times\Sp(2q,\CC)}\liemc\otimes K$.

If $\mathbb{V}\oplus\mathbb{W}$ is the standard $2p+2q$-dimensional complex representation of $\Sp(2p,\CC)\times\Sp(2q,\CC)$ and $\Omega_{\mathbb{V}}$ and $\Omega_{\mathbb{W}}$ denote the standard symplectic forms on $\mathbb{V}$ and $\mathbb{W}$ respectively, then the isotropy representation space is $$\liemc=\{(b,c)\in\Hom(\mathbb{W},\mathbb{V})\times\Hom(\mathbb{V},\mathbb{W})\,|\,\Omega_{\mathbb{V}}(b\,\cdot,\cdot)=-\Omega_{\mathbb{W}}(\cdot,c\,\cdot)\}.$$
Clearly, if $(b,c)\in\liemc$ then $b$ determines $c$. In fact,
$$\omega^{\mathbb W} c=-b^t\omega^{\mathbb V},$$
where $\omega^{\mathbb V}:V\to V^*$ and $\omega^{\mathbb W}:W\to W^*$ are the isomorphisms induced by $\Omega_{\mathbb{V}}$ and $\Omega_{\mathbb{W}}$.

In the vector bundle language, we have hence the following alternative definition of an $\Sp(2p,2q)$-Higgs bundle.
\begin{definition}
A \emph{$\Sp(2p,2q)$-Higgs bundle} over $X$ is a tuple $(V,\Omega_V,W,\Omega_W,\beta,\gamma)$,
where $(V,\Omega_V)$ and $(W,\Omega_W)$ are holomorphic symplectic vector bundles of rank $2p$ and $2q$ respectively, and
$(\beta,\gamma)\in H^0((\Hom(W,V)\oplus\Hom(V,W))\otimes K)$ are $K$-twisted homomorphisms $\beta:W\to V\otimes K$ and $\gamma:V\to W\otimes K$ such that $\Omega_V(\beta\,\cdot,\cdot)=-\Omega_W(\cdot,\gamma\,\cdot)$. 
\end{definition}

Given an $\Sp(2p,2q)$-Higgs bundle $(V,\Omega_V,W,\Omega_W,\beta,\gamma)$, we must of course have $V\cong V^*$ and $W\cong W^*$, through the skew-symmetric isomorphisms
$$
\omega^V:V\longrightarrow V^*\hspace{1cm}\text{and}\hspace{1cm}\omega^W:W\longrightarrow W^*
$$ induced by $\Omega_V$ and $\Omega_W$, and the condition on $\beta$ and $\gamma$ given on the definition is equivalent to
\begin{equation}\label{eq:condbetagamma}
(\beta^t\otimes 1_K)\omega^V=-(\omega^W\otimes 1_K)\gamma,
\end{equation}
so that $\beta$ determines $\gamma$ (and vice-versa).

For an $\Sp(2p,2q)$-Higgs bundle $(V,\Omega_V,W,\Omega_W,\beta,\gamma)$, we must of course have $$\deg(V)=\deg(W)=0.$$ In other words, the topological invariant of these objects given by the degree is always zero. This is of course consequence of the fact that the group $\Sp(2p,2q)$ is connected and simply-connected and that, for $G$ connected, $G$-Higgs bundles are topologically classified (cf. \cite{ramanathan:1975}) by the elements of $\pi_1G$.

\begin{remark}
 Two $G$-Higgs bundles $(E_{H^\CC},\varphi)$ and $(E_{H^\CC}',\varphi')$ over $X$ are \emph{isomorphic} if there is a holomorphic isomorphism $f:E_{H^\CC}\to E_{H^\CC}'$ such that $\varphi'=\tilde f(\varphi)$, where $\tilde f\otimes 1_K:E(\liemc)\otimes K\to E_{H^\CC}'(\liemc)\otimes K$ is the map induced from $f$ and from the isotropy representation $H^\CC\to\Aut(\liemc)$.
Hence, two $\Sp(2p,2q)$-Higgs bundles $(V,\Omega_V,W,\Omega_W,\beta,\gamma)$ and
$(V',\Omega_{V'},W',\Omega_{W'},\beta')$ are \emph{isomorphic} if there are isomorphisms $f:V\to V'$ and $g:W\to W'$ 
such that $\omega^V=f^t\omega_{V'}f$, $\omega^W=g^t\omega_{W'}g$ and $\beta' g=(f\otimes 1_K)\beta$.
\end{remark}

As $\Sp(2p+2q,\CC)$ is the complexification of $\Sp(2p,2q)$, Higgs bundles for the complex symplectic group will naturally play a role in this paper. Using Definition \ref{definition of Higgs bundle}, and the standard $2n$-dimensional complex representation of $\Sp(2n,\CC)$, we obtain the following definition.

\begin{definition}
A \emph{$\Sp(2n,\CC)$-Higgs bundle} over the compact Riemann surface $X$ is a tuple $((F,\Omega_F),\varphi)$, where $(F,\Omega_F)$ is a holomorphic symplectic vector bundle of rank $2n$ and $\varphi$ is a holomorphic $K$-twisted endomorphism of $F$ which is skew-symmetric with respect to the symplectic form $\Omega_F$ i.e. $\Omega_F(\varphi\,\cdot,\cdot)=-\Omega_F(\cdot,\varphi\,\cdot)$.
\end{definition}

\begin{remark}\label{rem:from SPpq to Spn}
From an $\Sp(2p,2q)$-Higgs bundle $(V,\Omega_V,W,\Omega_W,\beta,\gamma)$, one readily obtains the corresponding $\Sp(2p+2q,\CC)$-Higgs bundle $(E,\varphi)$ by taking $$E:=(V\oplus W,\Omega_V\oplus\Omega_W)$$ and
\begin{equation}\label{eq:eqdefHiggs}
\varphi:=\left(\begin{array}{cc}0 & \beta \\\gamma & 0\end{array}\right)
\end{equation} with respect to the decomposition $V\oplus W$. If $\omega:V\oplus W\to V^*\oplus W^*$ is the isomorphism corresponding to $\Omega_V\oplus\Omega_W$ i.e.
$$\omega=\left(\begin{array}{cc}\omega^V & 0 \\ 0 & \omega^W\end{array}\right),$$
then we must have
$$(\varphi^t\otimes 1_K)\omega=-(\omega\otimes 1_K)\varphi$$
which is obviously equivalent to \eqref{eq:condbetagamma}.
\end{remark}

\section{Moduli spaces}
\subsection{Stability conditions}

We now briefly deduce the stability conditions for $\Sp(2p,2q)$-Higgs bundles. All the details of this theory can be found in \cite{garcia-gothen-mundet:2008}, where several examples are studied.

We begin by stating the stability conditions for $\Sp(2n,\CC)$-Higgs bundles, which will also be needed. The following theorem is proved in \cite[Theorem 4.4]{garcia-gothen-mundet:2008}. Recall that if $(F,\Omega_F)$ is a symplectic
vector bundle, a subbundle $F'\subset F$ is said to be isotropic if the restriction of $\Omega_F$ to $F'$ is identically zero. 

\begin{theorem}\label{thm:sp(2n,C)-stability}
An $\Sp(2n,\CC)$-Higgs
bundle $((F,\Omega_F),\varphi)$ is:
\begin{itemize}
\item\emph{Semistable} if $\deg(F')\leq 0$, for any $\varphi$-invariant, isotropic vector subbundle $F'\subset F$.
\item\emph{Stable} if $\deg(F')<0$, for any $\varphi$-invariant, isotropic, proper vector subbundle $F'\subset F$.
\item\emph{Polystable} if it is semistable and for each $\varphi$-invariant, isotropic, proper vector subbundle $F'\subset F$ such that $\deg F'=0$ there is another coisotropic vector subbundle
$F''\subset F$ which is $\varphi$-invariant and $F=F'\oplus F''$.
\end{itemize} 
\end{theorem}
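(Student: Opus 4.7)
The plan is to specialize the general stability criterion for $G$-Higgs bundles proved in \cite{garcia-gothen-mundet:2008} (the principal-bundle formulation of stability via reductions to parabolic subgroups paired with antidominant characters) to the case $G=\Sp(2n,\CC)$. The general theorem asserts that a $G^\CC$-Higgs bundle $(E,\varphi)$ is semistable if and only if, for every parabolic subgroup $P\subset G^\CC$, every antidominant character $\chi$ of $\mathfrak{p}$, and every holomorphic reduction $\sigma$ of $E$ to $P$ such that $\varphi$ lies in the associated subbundle $E_\sigma(\liemc_\chi)\otimes K$, the degree $\deg(\sigma,\chi)\geq 0$; stability requires strict inequality when $\chi$ is a proper character, and polystability requires that whenever equality holds the reduction further reduces to the Levi factor.

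The key structural input is the classification of parabolic subgroups of $\Sp(2n,\CC)$: they are precisely the stabilizers of isotropic flags $0\subset U_1\subset U_2\subset\cdots\subset U_k\subset\CC^{2n}$, with each $U_i$ isotropic for $\Omega$. Hence a reduction of the principal $\Sp(2n,\CC)$-bundle associated to $(F,\Omega_F)$ to such a parabolic is the same datum as a filtration $0\subset F_1\subset\cdots\subset F_k\subset F$ by isotropic subbundles. The antidominant characters of the corresponding Levi are non-positive integer combinations of the natural weights, and a direct computation shows that the pairing $\deg(\sigma,\chi)$ becomes a non-negative linear combination of the quantities $-\deg F_i$. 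The $\varphi$-compatibility condition translates, via the explicit form of $\liemc=\mathfrak{sp}(2n,\CC)$ and its decomposition under the Levi, into $\varphi(F_i)\subset F_i\otimes K$ for all $i$, i.e., each $F_i$ is $\varphi$-invariant.

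Since every such non-negative combination of $-\deg F_i$ is non-negative as soon as each $-\deg F_i$ is non-negative, the condition reduces to considering only maximal parabolics, i.e., single isotropic $\varphi$-invariant subbundles $F'\subset F$. This yields the stated semistability and stability criteria. For polystability, the general theorem requires that when equality $\deg(\sigma,\chi)=0$ is achieved, the reduction to $P$ extends to a reduction to a Levi $L\subset P$; for $\Sp(2n,\CC)$ the Levi of the maximal parabolic stabilizing $F'$ is $\GL(\rk F')\times\Sp(2n-2\rk F',\CC)$, and a reduction to this Levi precisely corresponds to a $\varphi$-invariant splitting $F=F'\oplus F''$ with $F''$ a coisotropic complement on which $\Omega_F$ restricts non-degenerately, yielding the stated polystability condition.

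The main obstacle I anticipate is the bookkeeping in step two: verifying that the antidominant characters of a parabolic stabilizing an isotropic flag really pair with the degree of the reduction to give the displayed weighted combination of $-\deg F_i$, and, relatedly, that the $\varphi$-compatibility in $\liemc_\chi$ is equivalent to $\varphi$-invariance of every $F_i$ (this uses the explicit description of $\liemc$ as skew-symmetric endomorphisms with respect to $\Omega_F$, so that invariance of $F_i$ is automatically accompanied by preservation of the dual filtration $F_i^\perp$). Once these identifications are in place, the reduction to maximal parabolics and the handling of the polystable case proceed as in the analogous arguments for $\Sp(2n,\RR)$- and $\Or(p,q)$-Higgs bundles treated in \cite{garcia-gothen-mundet:2008}.
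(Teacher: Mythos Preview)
Your proposal is correct and is essentially the approach of the reference the paper invokes: the paper does not give its own proof of this theorem but simply cites \cite[Theorem 4.4]{garcia-gothen-mundet:2008}, where the result is obtained exactly by specializing the general parabolic-reduction/antidominant-character criterion for $G$-Higgs bundles to $G=\Sp(2n,\CC)$, using the identification of parabolics with stabilizers of isotropic flags. Your anticipated bookkeeping (degree pairing as a non-negative combination of $-\deg F_i$, $\varphi$-compatibility as $\varphi$-invariance of each $F_i$ together with the induced invariance of $F_i^{\perp}$, and Levi reduction as a $\varphi$-invariant isotropic/coisotropic splitting) matches that argument.
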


In order to state the stability condition for $\Sp(2p,2q)$-Higgs bundles, we first introduce some notation.
For each vector subbundle $V'$ of $V$, denote by $V'^{\perp_{\Omega_V}}$ the orthogonal complement of $V'$ with respect to the symplectic form $\Omega_V$. Define similarly $W'^{\perp_{\Omega_W}}$ for a subbundle $W'\subset W$.

For any pair of filtrations
\begin{equation}\label{eq:filtration V}
\mathcal V:=(0=V_0\subsetneq V_1\subsetneq V_2\subsetneq\dots\subsetneq V_k=V)
\end{equation}
\begin{equation}\label{eq:filtration W}
\mathcal W:=(0=W_0\subsetneq W_1\subsetneq W_2\subsetneq\dots\subsetneq W_l=W)
\end{equation}
satisfying $V_{k-i} = V_i^{\perp_{\Omega_V}}$ and $W_{l-j} = W_j^{\perp_{\Omega_W}}$, let
$$\Lambda(\mathcal V):=\{(\lambda_1,\lambda_2,\dots,\lambda_k)\in\RR^k
\mid \lambda_i\leq \lambda_{i+1}\text{ and
}\lambda_{k-i+1}=-\lambda_i\text{ for any }i\},$$
$$\Lambda(\mathcal W):=\{(\mu_1,\mu_2,\dots,\mu_l)\in\RR^l
\mid \mu_j\leq \mu_{j+1}\text{ and
}\mu_{l-j+1}=-\mu_j\text{ for any }j\}.$$ For each $\lambda=(\lambda_1,\lambda_2,\dots,\lambda_k)\in\Lambda(\mathcal V)$ and $\mu=(\mu_1,\mu_2,\dots,\mu_l)\in\Lambda(\mathcal W)$, consider the subbundle of $\End(V\oplus W)\otimes K$ defined by
$$\tilde N(\mathcal V,\mathcal W,\lambda,\mu):=\bigcap_{\lambda_i=\mu_j}\left\{(b,c)\in\End(V\oplus W)\otimes K\st b(W_j)\subseteq V_i\otimes K,\, c(V_i)\subseteq(W_j)\otimes K\right\}$$
and let
$$
d(\mathcal V,\lambda):=\sum_{i=1}^{k-1}(\lambda_i-\lambda_{i+1})\deg V_i\hspace{.5cm}\text{and}\hspace{.5cm}d(\mathcal W,\mu):=\sum_{j=1}^{l-1}(\mu_j-\mu_{j+1})\deg W_j.$$

Notice that, for an $\Sp(2p,2q)$-Higgs bundle $(V,\Omega_V,W,\Omega_W,\beta,\gamma)$, if $V'\subset V$ and $W'\subset W$, then \eqref{eq:condbetagamma} implies the following equivalence:
\begin{equation}\label{eq:betainvandgammainv}
\beta(W')\subset V'\otimes K\Longleftrightarrow\gamma(V')\subset W'\otimes K.
\end{equation}
Both conditions are clearly equivalent to the $\varphi$-invariance of $V'\oplus W'\subset V\oplus W$ where $\varphi$ is given by \eqref{eq:eqdefHiggs}.
Another way to state equivalence \eqref{eq:betainvandgammainv} is the following:
$$(\beta,\gamma)\in H^0(\tilde N(\mathcal V,\mathcal W,\lambda,\mu))\Longleftrightarrow \beta\in H^0( N(\mathcal V,\mathcal W,\lambda,\mu))$$ where
$$N(\mathcal V,\mathcal W,\lambda,\mu):=\bigcap_{\lambda_i=\mu_j}\left\{b\in\Hom(W,V)\otimes K\st b(W_j)\subseteq V_i\otimes K\right\}\subseteq \Hom(W,V)\otimes K.$$

Having these definitions, and according to \cite{garcia-gothen-mundet:2008}, we can now state the stability conditions for an $\Sp(2p,2q)$-Higgs bundle.
\begin{proposition}\label{prop:sp(2p,2q)-alpha-stability}
An $\Sp(2p,2q)$-Higgs bundle
$(V,\Omega_V,W,\Omega_W,\beta,\gamma)$ is:
\begin{itemize}
 \item\emph{Semistable} if and only if $d(\mathcal V,\lambda)+d(\mathcal W,\mu)\geq 0$ for
  any choice of filtrations $\mathcal V$ and $\mathcal W$ as in \eqref{eq:filtration V} and \eqref{eq:filtration W} and any
  $(\lambda,\mu)\in\Lambda(\mathcal V)\times\Lambda(\mathcal W)$ such that
  $\beta\in H^0(N(\mathcal V,\mathcal W,\lambda,\mu))$.
 \item\emph{Stable} if and only if $d(\mathcal V,\lambda)+d(\mathcal W,\mu)> 0$ for
  any choice of filtrations $\mathcal V$ and $\mathcal W$ as in \eqref{eq:filtration V} and \eqref{eq:filtration W} and any
  $(\lambda,\mu)\in\Lambda(\mathcal V)\times\Lambda(\mathcal W)\setminus\{(0,0)\}$ such that
  $\beta\in H^0(N(\mathcal V,\mathcal W,\lambda,\mu))$.
\item\emph{Polystable} if and only if it is
 semistable and, for any choice of filtrations $\mathcal V$ and $\mathcal W$ as in \eqref{eq:filtration V} and \eqref{eq:filtration W} and any
 $(\lambda,\mu)\in\Lambda(\mathcal V)\times\Lambda(\mathcal W)$ satisfying $\lambda_i<\lambda_{i+1}$ and $\mu_i<\mu_{i+1}$
 for each $i$, such that $\beta\in H^0(N(\mathcal V,\mathcal W,\lambda,\mu))$ and
 $d(\mathcal V,\lambda)+d(\mathcal W,\mu)=0$, there are isomorphisms
 $$V\simeq V_1\oplus V_2/V_1\oplus\dots\oplus V_k/V_{k-1}$$
 and
 $$W\simeq W_1\oplus W_2/W_1\oplus\dots\oplus W_l/W_{l-1}$$
 such that $$\Omega_V(V_i/V_{i-1},V_j/V_{j-1})=0,\ \text{ unless }\ j=k+1-i$$ and $$\Omega_W(W_i/W_{i-1},W_j/W_{j-1})=0,\ \text{ unless }\ j=l+1-i.$$
Moreover, via this isomorphism,
 $$\beta\in H^0\bigg(\bigoplus_{\lambda_j=\mu_i}\Hom(W_j/W_{j-1},V_i/V_{i-1})\otimes K\bigg).$$
\end{itemize}
\end{proposition}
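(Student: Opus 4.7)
The plan is to specialize the general Hitchin--Kobayashi stability theorem for $G$-Higgs bundles, as developed in \cite{garcia-gothen-mundet:2008}, to the case $G=\Sp(2p,2q)$. That general result characterizes (semi/poly)stability of a $G$-Higgs bundle $(E_{H^\CC},\varphi)$ in terms of reductions of $E_{H^\CC}$ to parabolic subgroups $P\subseteq H^\CC$, antidominant characters of $P$, and a compatibility condition between $\varphi$ and the reduction. All that is needed is to translate each of these three ingredients into concrete data for $H^\CC=\Sp(2p,\CC)\times\Sp(2q,\CC)$ and the isotropy representation $\liemc$ described in the previous section.

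First, I would classify the parabolic subgroups of $H^\CC$. Since a parabolic in $\Sp(2n,\CC)$ is the stabilizer of an isotropic flag in the standard representation, a parabolic of $H^\CC$ is precisely the stabilizer of a pair of isotropic flags, one in $\mathbb{V}$ and one in $\mathbb{W}$. A reduction of $E_{H^\CC}$ to such a parabolic is then the same data as a pair of filtrations $\mathcal V$, $\mathcal W$ of $V$ and $W$ satisfying the symplectic symmetry $V_{k-i}=V_i^{\perp_{\Omega_V}}$ and $W_{l-j}=W_j^{\perp_{\Omega_W}}$. Next, the antidominant characters of such a parabolic are, up to a positive multiple, elements of the dual of the Lie algebra of a Levi subgroup, and the symplectic symmetry forces them to be of the form $(\lambda,\mu)\in\Lambda(\mathcal V)\times\Lambda(\mathcal W)$ as defined above, the sign-reversal coming from the fact that characters of $\Sp(2n,\CC)$ are traceless with a further palindromic symmetry induced by $\Omega$. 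A routine computation then shows that the pairing of the antidominant character $(\lambda,\mu)$ with the degree of the reduction equals $d(\mathcal V,\lambda)+d(\mathcal W,\mu)$.

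The third ingredient is the compatibility condition on the Higgs field. In the general setup, $\varphi$ must take values in the non-positive weight subbundle of $E(\liemc)\otimes K$ with respect to the one-parameter subgroup determined by the antidominant character. Decomposing $\liemc$ into weight spaces for the $\lambda\oplus\mu$-grading and using the fact that $\beta$ and $\gamma$ are linked by \eqref{eq:condbetagamma}, as noted in the equivalence \eqref{eq:betainvandgammainv}, this compatibility reduces to the single condition $\beta\in H^0(N(\mathcal V,\mathcal W,\lambda,\mu))$, since the required condition on $\gamma$ is automatically implied. Assembling these translations gives the semistability and stability statements of the proposition directly from the general theorem.

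For polystability one must further analyze the Levi factor of the parabolic and the condition that the reduction lifts to the Levi. The Levi of the stabilizer of the pair of flags is $\prod_i\GL(V_i/V_{i-1})\times\prod_j\GL(W_j/W_{j-1})$ intersected with the symplectic symmetry, which forces the isomorphisms $V\simeq\bigoplus V_i/V_{i-1}$ and $W\simeq\bigoplus W_j/W_{j-1}$ to pair up dual graded pieces via $\Omega_V$ and $\Omega_W$ in the stated form $j=k+1-i$ and $j=l+1-i$. The polystable condition that $\varphi$ be annihilated by the strict lowering operator translates, after using the weight decomposition of $\liemc$, into the statement that $\beta$ lies in the subbundle $\bigoplus_{\lambda_j=\mu_i}\Hom(W_j/W_{j-1},V_i/V_{i-1})\otimes K$. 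The delicate point in this last step is checking that the weight spaces of $\liemc$ under the one-parameter subgroup are correctly indexed by the equalities $\lambda_j=\mu_i$ (and not the opposite convention coming from $\gamma$); this is a bookkeeping issue resolved by consistent use of \eqref{eq:condbetagamma} and is the only place where the proof requires genuine care beyond invoking the general theorem.
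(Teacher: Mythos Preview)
Your proposal is correct and follows exactly the approach the paper takes: the paper does not give an explicit proof of this proposition at all, but simply states it as the specialization of the general stability theory of \cite{garcia-gothen-mundet:2008} to $G=\Sp(2p,2q)$, which is precisely what you outline. If anything, your sketch of how the parabolics, antidominant characters, and Levi factors of $\Sp(2p,\CC)\times\Sp(2q,\CC)$ translate into the filtration data $(\mathcal V,\mathcal W,\lambda,\mu)$ is more detailed than what the paper provides.
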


There is a simplification of the stability condition for $\Sp(2p,2q)$-Higgs bundles analogous to the cases considered in \cite{garcia-gothen-mundet:2008}. 

Bearing \eqref{eq:betainvandgammainv} in mind, we can now state the simplified version of the (semi,poly)stability condition for $\Sp(2p,2q)$-Higgs bundles.

\begin{theorem}\label{thm:orthogonal-stability}
  An $\Sp(2p,2q)$-Higgs bundle $(V,\Omega_V,W,\Omega_W,\beta,\gamma)$
  is:
\begin{itemize}
 \item Semistable if and only if $\deg V'+\deg W'\leq 0$ for any pair of isotropic subbundles $V'\subset V$ and $W'\subset W$ such that $\beta(W')\subset V'\otimes K$.
 \item Stable if and only if $\deg V'+\deg W'<0$ for any pair of isotropic subbundles $V'\subset V$ and $W'\subset W$ such that at least one of them is a proper subbundle and $\beta(W')\subset V'\otimes K$.
 \item Polystable if and only if it is semistable and, for any pair of isotropic (resp. coisotropic) subbundles $V'\subset V$ and $W'\subset W$, for which at least one of them is a proper subbundle and $\beta(W')\subset V'\otimes K$, such that $\deg V'+\deg W'=0$, there are other coisotropic (resp. isotropic) subbundles $V''\subset V$ and $W''\subset W$ with $\beta(W'')\subset V''\otimes K$  so that $V\cong V'\oplus V''$ and $W\cong W'\oplus W''$.
\end{itemize}
\end{theorem}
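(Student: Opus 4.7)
\textbf{Proof plan for Theorem \ref{thm:orthogonal-stability}.}

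The plan is to show the simplified conditions in Theorem \ref{thm:orthogonal-stability} are equivalent to the filtration-based conditions of Proposition \ref{prop:sp(2p,2q)-alpha-stability}. The implication from Proposition to Theorem is the easier one. Given isotropic subbundles $V'\subset V$ and $W'\subset W$ with $\beta(W')\subseteq V'\otimes K$, I would build the length-three filtrations
\[
\mathcal V:(0\subsetneq V'\subsetneq V'^{\perp_{\Omega_V}}\subsetneq V),\qquad \mathcal W:(0\subsetneq W'\subsetneq W'^{\perp_{\Omega_W}}\subsetneq W),
\]
(shortening them if $V'$ is Lagrangian or $V'=0$, and likewise for $W'$), together with the admissible weights $\lambda=(-1,0,1)\in\Lambda(\mathcal V)$ and $\mu=(-1,0,1)\in\Lambda(\mathcal W)$. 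Using the non-degeneracy of $\Omega_V$, the isomorphism $V/V'^{\perp_{\Omega_V}}\cong (V')^*$ yields $\deg V'^{\perp_{\Omega_V}}=\deg V'$, and similarly for $W$, so the formula for $d(\mathcal V,\lambda)+d(\mathcal W,\mu)$ collapses to $-2(\deg V'+\deg W')$. The required membership $\beta\in H^0(N(\mathcal V,\mathcal W,\lambda,\mu))$ reduces to $\beta(W')\subseteq V'\otimes K$ (given) and $\beta(W'^{\perp_{\Omega_W}})\subseteq V'^{\perp_{\Omega_V}}\otimes K$; the latter is obtained by dualising $\beta(W')\subseteq V'\otimes K$ through \eqref{eq:condbetagamma}, exactly as in the derivation of \eqref{eq:betainvandgammainv}. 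Plugging into Proposition \ref{prop:sp(2p,2q)-alpha-stability} then yields the inequality $\deg V'+\deg W'\leq 0$ (resp.\ $<0$ in the stable case).

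For the converse direction, the idea is a convexity/decomposition argument. Given any admissible $\lambda\in\Lambda(\mathcal V)$ and $\mu\in\Lambda(\mathcal W)$ satisfying $\beta\in H^0(N(\mathcal V,\mathcal W,\lambda,\mu))$, note that each $V_i$ with $i\leq k/2$ is isotropic since $V_i\subseteq V_{k-i}=V_i^{\perp_{\Omega_V}}$, and likewise each $W_j$ with $j\leq l/2$ is isotropic. I would then write $\lambda=\sum_{i}c_i\lambda^{(i)}$ with $c_i=\lambda_{i+1}-\lambda_i\geq 0$, where $\lambda^{(i)}$ denotes the elementary step vector that jumps from $-1$ to $0$ at position $i$ (and by the symmetry rule from $0$ to $1$ at position $k-i$). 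A symmetric decomposition is done for $\mu$. By linearity, $d(\mathcal V,\lambda)+d(\mathcal W,\mu)$ becomes the non-negative combination $-2\sum_ic_i\deg V_i-2\sum_jc'_j\deg W_j$. The invariance condition $\beta\in H^0(N(\mathcal V,\mathcal W,\lambda,\mu))$ forces $\beta(W_j)\subseteq V_i\otimes K$ whenever $\lambda_i=\mu_j$; in particular, choosing the weight scale so that a fixed pair $(i,j)$ with $i\leq k/2$, $j\leq l/2$ lies among the equal values gives pairs of isotropic subbundles $(V_i,W_j)$ satisfying the hypothesis of Theorem \ref{thm:orthogonal-stability}. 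Applying the Theorem's inequality to every such pair and recombining with the non-negative coefficients recovers $d(\mathcal V,\lambda)+d(\mathcal W,\mu)\geq 0$ (or $>0$ if $(\lambda,\mu)\neq 0$).

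\textbf{Main obstacle.} The delicate point is the second direction: the subset $N(\mathcal V,\mathcal W,\lambda,\mu)$ depends on which components of $\lambda$ and $\mu$ coincide, which is not a convex function of $(\lambda,\mu)$. Thus the reduction to elementary weights must be made so that each sub-test one performs on a pair $(V_i,W_j)$ of isotropic subbundles actually respects the invariance imposed by the original $(\lambda,\mu)$. I expect to handle this by perturbing $(\lambda,\mu)$ within the cone $\{\beta\in H^0(N(\cdot,\cdot))\}$ so that only matched pairs producing an essential constraint remain, and then grouping terms accordingly, exactly as in the analogous arguments in \cite{garcia-gothen-mundet:2008}.

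\textbf{Polystability.} For the polystable statement I would argue iteratively: a pair $(V',W')$ of isotropic subbundles realising equality $\deg V'+\deg W'=0$ together with $\beta(W')\subseteq V'\otimes K$ determines a three-step filtration as above with trivial $d$; applying the polystable part of Proposition \ref{prop:sp(2p,2q)-alpha-stability} produces a splitting $V\cong V'\oplus V''$ and $W\cong W'\oplus W''$ compatible with the symplectic forms (so $V''$ and $W''$ are coisotropic complements) and with $\beta$ preserving the decomposition, which is precisely the desired conclusion.
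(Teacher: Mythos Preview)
Your proposal is correct and follows essentially the same route as the paper: the ``easy'' direction via the three-step filtrations $0\subset V'\subset V'^{\perp_{\Omega_V}}\subset V$, $0\subset W'\subset W'^{\perp_{\Omega_W}}\subset W$ with weights $(-1,0,1)$ is exactly how the paper proves the converse, and the harder direction is handled by the same convex-cone reduction you outline. The one place where the paper is sharper is precisely your ``main obstacle'': rather than perturbing, the paper identifies the generators of the cone $\Lambda(\mathcal V,\mathcal W,\beta)$ explicitly by introducing the index set $\mathcal J'=\{(i,j):\beta(W_j)\subset V_i\otimes K,\ 2i\le k,\ 2j\le l\}$ and showing (via the coupling \eqref{eq:Lambda-JJJ} that forces the jumps of $\lambda$ and $\mu$ to occur simultaneously at pairs in $\mathcal J$) that $\Lambda(\mathcal V,\mathcal W,\beta)$ is the positive span of the \emph{paired} step vectors $(L_i,M_j)$ for $(i,j)\in\mathcal J'$, where $L_i,M_j$ are your elementary jump vectors; evaluating $d(\mathcal V,\cdot)+d(\mathcal W,\cdot)$ on each such generator gives $-2(\deg V_i+\deg W_j)$, and the simplified hypothesis applies directly. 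Your separate decompositions of $\lambda$ and $\mu$ would need this pairing to recombine the terms, so the set $\mathcal J'$ is the missing bookkeeping device.
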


\begin{proof}
Let us deal first with the semistability statement.
Consider an $\Sp(2p,2q)$-Higgs bundle $(V,\Omega_V,W,\Omega_W,\beta,\gamma)$ for which the stated condition holds: for any pair of isotropic subbundles $V'\subset V$ and $W'\subset W$ such that $\beta(W')\subset V'\otimes K$, we have $$\deg V'+\deg W'\leq 0.$$
 We want to prove that $(V,\Omega_V,W,\Omega_W,\beta,\gamma)$
is semistable and we will do it by making use of Proposition \ref{prop:sp(2p,2q)-alpha-stability}. Suppose that $\beta$ is nonzero, for otherwise the result follows from the usual characterization of (semi)stability for $\Sp(2p,\CC)\times\Sp(2q,\CC)$-principal bundles due to Ramanathan (see \cite[Remark 3.1]{ramanathan:1975}).

Choose any pair of filtrations as in \eqref{eq:filtration V} and \eqref{eq:filtration W},
satisfying $V_{k-i} = V_i^{\perp_{\Omega_V}}$ and $W_{l-j} = W_j^{\perp_{\Omega_W}}$. As before, let
$$\Lambda(\mathcal V):=\{\lambda=(\lambda_1,\lambda_2,\dots,\lambda_k)\in\RR^k
\mid \lambda_i\leq \lambda_{i+1}\text{ and
}\lambda_{k-i+1}=-\lambda_i\text{ for any }i\},$$
$$\Lambda(\mathcal W):=\{\mu=(\mu_1,\mu_2,\dots,\mu_l)\in\RR^l
\mid \mu_j\leq \mu_{j+1}\text{ and
}\mu_{l-j+1}=-\mu_j\text{ for any }j\},$$
and consider the convex set
$$\Lambda(\mathcal V,\mathcal W,\beta):= \{(\lambda,\mu)\in\Lambda(\mathcal V)\times\Lambda(\mathcal W)\mid \beta\in H^0(N(\mathcal V,\mathcal W,\lambda,\mu))\}\subset\RR^k\times\RR^l,$$
where $$N(\mathcal V,\mathcal W,\lambda,\mu):=\bigcap_{\lambda_i=\mu_j}\left\{b\in\Hom(W,V)\otimes K\st b(W_j)\subseteq V_i\otimes K\right\}.$$

Define
$
\mathcal J:=\{(i,j)\mid
\beta(W_j)\subset V_i\otimes K\}=\{(i_1,j_1),\dots,(i_r,j_r)\},$ with $r\leq\min\{k,l\}$.
It is easily checked that if $\lambda=(\lambda_1,\dots,\lambda_k)\in\Lambda(\mathcal V)$ and $\mu=(\mu_1,\dots,\mu_l)\in\Lambda(\mathcal W)$ then, for $(i_m,j_m),(i_{m+1},j_{m+1})\in\mathcal J$ the following holds:
\begin{equation}\label{eq:Lambda-JJJ}
(\lambda,\mu)\in\Lambda(\mathcal V, \mathcal W,\beta)
\Longleftrightarrow \lambda_a=\lambda_{i_{m+1}}=\mu_{j_{m+1}}=\mu_b, 
\end{equation}
for every $i_m<a\leq i_{m+1}$ and $j_m<b\leq j_{m+1}$. Also, the relations $\Omega_V(\beta\,\cdot,\cdot)=-\Omega_W(\cdot,\gamma\,\cdot)$, $V_{k-i} = V_i^{\perp_{\Omega_V}}$ and $W_{l-j} = W_j^{\perp_{\Omega_W}}$ imply that the set of indices in $\mathcal J$ is symmetric:
\begin{equation}\label{eq:JJJ-symmetric}
(i,j)\in\mathcal J \Longleftrightarrow (k-i,l-j)\in\mathcal J.
\end{equation}

Let now $\mathcal J':=\{(i,j)\in\mathcal J\mid 2i\leq k, 2j\leq l\}$ and, for each
$(i,j)\in\mathcal J'$, define the vectors
$$L_i:=-\sum_{c\leq i}e_c+\sum_{d\geq k-i+1}e_d\text{ and }M_j:=-\sum_{c\leq j}e'_c+\sum_{d\geq l-j+1}e'_d$$
where $e_1,\dots,e_k$ and $e'_1,\dots,e'_l$ are the canonical basis of $\RR^k$ and of $\RR^l$ respectively. From (\ref{eq:Lambda-JJJ}) and (\ref{eq:JJJ-symmetric}), we conclude
that $\Lambda(\mathcal V, \mathcal W,\beta)$ is the positive span of the set $\{
L_i,M_j\mid (i,j)\in\mathcal J'\}$. Hence,
$$d(\mathcal V,\lambda)+d(\mathcal W,\mu)\geq 0\text{ for any }(\lambda,\mu)\in\Lambda(\mathcal V,\mathcal W,\beta)$$ if and only if $$d(\mathcal V,L_i)+d(\mathcal W,M_j)\geq 0\text{ for any }(i,j)\in\mathcal{J}'.$$
Now, we compute $$d(\mathcal V,L_i)+d(\mathcal W,M_j)=-\deg V_{k-i}-\deg V_i-\deg W_{l-j}-\deg W_l=-2(\deg V_i+\deg W_j)$$ where in the second equality we have used that $\deg V_{k-i}=\deg V_i$ and $\deg W_{l-j}=\deg W_l$, since $V_{k-i}=V_i^{\perp_{\Omega_V}}$ and $W_{l-j}=W_j^{\perp_{\Omega_W}}$. Therefore $d(\mathcal V,\lambda)+d(\mathcal W,\mu)\geq 0$ for any $(\lambda,\mu)\in\Lambda(\mathcal V,\mathcal W,\beta)$ is
equivalent to $\deg V_i+\deg W_j\leq 0$ for every $(i,j)\in\mathcal J'$, which holds by assumption, because, for such $(i,j)$, we have $\beta(W_j)\subset V_i\otimes K$ and since $W_j$ and $V_i$ are isotropic. Hence, from Proposition \ref{prop:sp(2p,2q)-alpha-stability}, it follows that $(V,\Omega_V,W,\Omega_W,\beta,\gamma)$ is semistable.

The converse statement is readily obtained by applying the semistability condition of Proposition \ref{prop:sp(2p,2q)-alpha-stability} to the filtrations
 $0\subset V'\subset V'^{\perp_{\Omega}}\subset V$ and $0\subset W'\subset W'^{\perp_{\Omega}}\subset W$.

The proof of the second and third items follow along the same lines.
\end{proof}

\begin{remark}
Given an $\Sp(2p,2q)$-Higgs bundle $(V,\Omega_V,W,\Omega_W,\beta,\gamma)$, we saw in Remark \ref{rem:from SPpq to Spn}, that we can construct a corresponding $\Sp(2p+2q,\CC)$-Higgs bundle $(V\oplus W, \Omega_V\oplus\Omega_W,\varphi)$, with $$\varphi:=\left(\begin{array}{cc}0 & \beta \\\gamma & 0\end{array}\right).$$ It will be shown in Section \ref{sec:stable and non-simple} that $(V,\Omega_V,W,\Omega_W,\beta,\gamma)$ is stable (as in Theorem \ref{thm:orthogonal-stability}) if and only if $(V\oplus W, \Omega_V\oplus\Omega_W,\varphi)$ is stable (as in Theorem \ref{thm:sp(2n,C)-stability}). 
\end{remark}

Through the above theorem, one can define the moduli space $\M_{\Sp(2p,2q)}$ of polystable $\Sp(2p,2q)$-Higgs bundles. The construction of the moduli spaces of $G$-Higgs bundles is a particular case of a general
construction of Schmitt  \cite{schmitt:2008}, using methods of Geometric Invariant Theory, and showing that they carry a natural structure of complex algebraic variety.

\subsection{Deformation theory of $\Sp(2p,2q)$-Higgs bundles}

In this section, we briefly study the deformation theory of $\Sp(2p,2q)$-Higgs bundles and, in particular, the identification of the tangent space of $\M_{\Sp(2p,2q)}$ at the smooth points with the first hypercohomology group of a certain complex of sheaves over the Riemann surface $X$. All basic notions can be found in detail in \cite{garcia-gothen-mundet:2008}.

For each $\Sp(2p,2q)$-Higgs bundle $(V,\Omega_V,W,\Omega_W,\beta,\gamma)$, let
\begin{equation}\label{eq:M}
M(V,\Omega_V,W,\Omega_W):=\{(f,g)\in\Hom(W,V)\oplus\Hom(V,W)\st\omega^Wg=-f^t\omega^V\}\cong\Hom(W,V).
\end{equation} Then, there is a  complex defined as
\begin{equation}\label{eq:defcomplex}
C^\bullet:=C^\bullet_{(V,\Omega_V,W,\Omega_W,\beta,\gamma)}:\Lambda_{\Omega_V}^2V\oplus\Lambda_{\Omega_W}^2W\xrightarrow{\ad(\beta,\gamma)}M(V,\Omega_V,W,\Omega_W)\otimes K
\end{equation}
where $\Lambda_{\Omega_V}^2V$ (resp. $\Lambda_{\Omega_W}^2W$) denotes the bundle of endomorphisms of $V$ (resp. $W$) which are skew-symmetric with respect to $\Omega_V$ (resp. $\Omega_W$), and where $$\ad(\beta,\gamma)(f,g)=(\beta g-(f\otimes 1_K)\beta,\gamma f-(g\otimes 1_K)\gamma).$$
This is in fact a particular case of a general complex for every $G$-Higgs bundle $(E_{H^\CC},\varphi)$, given by $C^\bullet_{(E_{H^\CC},\varphi)}:E_{H^\CC}(\liehc)\xrightarrow{d\iota(\varphi)}E_{H^\CC}(\liemc)\otimes K$, where $d\iota$ represents the differential of the isotropy representation.

The following proposition follows from the general theory of $G$-Higgs bundles. The first item generalises the well-known fact that the deformation space of a holomorphic vector bundle $V$ is given by $H^1(\End(V))$ and it can be proved for instance by using \v{C}ech cohomology to represent an infinitesimal deformation as an element of $\mathrm{Spec}(\CC[\epsilon]/(\epsilon^2))$. The second item follows from the property of hypercohomology which states the existence of a long exact sequence associated to a short exact sequence of complexes. A convenient reference for these facts is \cite{biswas-ramanan:1994}.

\begin{proposition}\label{deformation complex for Sp(2p,2q)}
Let $(V,\Omega_V,W,\Omega_W,\beta,\gamma)$ be an $\Sp(2p,2q)$-Higgs bundle over $X$.
\begin{enumerate}
    \item The
infinitesimal deformation space of $(V,\Omega_V,W,\Omega_W,\beta,\gamma)$ is isomorphic to
the first hypercohomology group
$\HH^1(C^\bullet)$ of the complex \eqref{eq:defcomplex}.

\noindent In particular, if $(V,\Omega_V,W,\Omega_W,\beta,\gamma)$
represents a smooth point of $\M_{\Sp(2p,2q)}$, then the tangent space of $\M_{\Sp(2p,2q)}$ at this point is canonically isomorphic to $\HH^1(C^\bullet)$.
    \item There is an exact sequence
\begin{equation*}
\begin{split}
0&\longrightarrow\HH^0(C^\bullet)\longrightarrow
H^0(\Lambda_{\Omega_V}^2V\oplus\Lambda_{\Omega_W}^2W)\longrightarrow H^0(M(V,\Omega_V,W,\Omega_W)\otimes K)\longrightarrow\\
&\longrightarrow\HH^1(C^\bullet)\longrightarrow H^1(\Lambda_{\Omega_V}^2V\oplus\Lambda_{\Omega_W}^2W)\longrightarrow
H^1(M(V,\Omega_V,W,\Omega_W)\otimes K)\longrightarrow\\
&\longrightarrow\HH^2(C^\bullet)\longrightarrow 0
\end{split}
\end{equation*}
where
the maps $H^i(\Lambda_{\Omega_V}^2V\oplus\Lambda_{\Omega_W}^2W)\to
H^i(M(V,\Omega_V,W,\Omega_W)\otimes K)$ are induced by $\ad(\beta,\gamma)$ and $M(V,\Omega_V,W,\Omega_W)$ is defined in \eqref{eq:M}.
\end{enumerate}
\end{proposition}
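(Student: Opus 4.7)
The plan is to specialise the general deformation theory of $G$-Higgs bundles, as developed in \cite{garcia-gothen-mundet:2008} and \cite{biswas-ramanan:1994}, to the case $G=\Sp(2p,2q)$, and to identify the generic deformation complex $C^\bullet_{(E_{H^\CC},\varphi)}:E_{H^\CC}(\liehc)\to E_{H^\CC}(\liemc)\otimes K$ with the explicit complex $C^\bullet$ displayed in \eqref{eq:defcomplex}. The key preliminary step is to identify the associated bundles. Since $\liehc=\mathfrak{sp}(2p,\CC)\oplus\mathfrak{sp}(2q,\CC)$ consists exactly of endomorphisms skew-symmetric with respect to the respective symplectic forms, one obtains fibrewise $E_{H^\CC}(\liehc)\cong\Lambda_{\Omega_V}^2V\oplus\Lambda_{\Omega_W}^2W$. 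Similarly, the description of $\liemc$ given at the start of Section~\ref{section:Sp(2p,2q)-Higgs bundles} as pairs $(b,c)$ satisfying $J_pb=-c^tJ_q$ globalises to $E_{H^\CC}(\liemc)\cong M(V,\Omega_V,W,\Omega_W)$ as defined in \eqref{eq:M}. A direct computation then shows that the differential of the isotropy representation applied to $\varphi=(\beta,\gamma)$ is exactly $\ad(\beta,\gamma)(f,g)=(\beta g-(f\otimes 1_K)\beta,\gamma f-(g\otimes 1_K)\gamma)$.

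For item (i), the approach is the standard \v{C}ech cocycle description of first-order deformations. Choose a sufficiently fine open cover $\{U_\alpha\}$ of $X$ on which $E_{H^\CC}$ has transition functions $h_{\alpha\beta}$ and $\varphi$ has local expressions $\varphi_\alpha$. An infinitesimal deformation over $\mathrm{Spec}(\CC[\epsilon]/(\epsilon^2))$ is encoded by deformed transition data $h_{\alpha\beta}\exp(\epsilon\xi_{\alpha\beta})$ together with deformed Higgs fields $\varphi_\alpha+\epsilon\psi_\alpha$, where $\{\xi_{\alpha\beta}\}$ is a $1$-cochain with values in $E_{H^\CC}(\liehc)$ and $\{\psi_\alpha\}$ is a $0$-cochain with values in $E_{H^\CC}(\liemc)\otimes K$, subject to the cocycle condition on $\xi_{\alpha\beta}$ and the compatibility relation $\psi_\beta-\psi_\alpha=d\iota(\varphi_\alpha)(\xi_{\alpha\beta})$. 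This is exactly the data of a hypercohomology $1$-cocycle for $C^\bullet$, and gauge-equivalent deformations give cohomologous cocycles. Hence the infinitesimal deformation space is canonically $\HH^1(C^\bullet)$, which at smooth points of $\M_{\Sp(2p,2q)}$ coincides with the Zariski tangent space.

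For item (ii), I would exhibit $C^\bullet$ as an extension of one-term complexes via the short exact sequence of complexes
\[
0\longrightarrow \bigl[0\to M(V,\Omega_V,W,\Omega_W)\otimes K\bigr]\longrightarrow C^\bullet\longrightarrow\bigl[\Lambda_{\Omega_V}^2V\oplus\Lambda_{\Omega_W}^2W\to 0\bigr]\longrightarrow 0,
\]
and invoke the associated long exact sequence in hypercohomology. Since the hypercohomology of a one-term complex reduces to ordinary sheaf cohomology of its unique nonzero term (with the appropriate degree shift in the first factor), the resulting six-term sequence is precisely the one stated, with the connecting homomorphisms induced by the only nonzero differential $\ad(\beta,\gamma)$ of $C^\bullet$.

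The main obstacle is really just the bookkeeping in the first step: pinning down the identification $E_{H^\CC}(\liemc)\cong M(V,\Omega_V,W,\Omega_W)$ and verifying that $d\iota(\varphi)$ translates into $\ad(\beta,\gamma)$ under this identification. This is essentially linear-algebraic and representation-theoretic, but it needs to be done carefully so that the symplectic skew-symmetry conditions defining $\liehc$ and $\liemc$ are respected. Once these identifications are in hand, both items are direct applications of standard \v{C}ech and hypercohomology machinery.
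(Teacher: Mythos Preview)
Your proposal is correct and follows essentially the same approach as the paper: the paper merely sketches that item (i) follows from the general deformation theory of $G$-Higgs bundles via the \v{C}ech/$\mathrm{Spec}(\CC[\epsilon]/(\epsilon^2))$ description (citing \cite{biswas-ramanan:1994}), and that item (ii) is the long exact hypercohomology sequence attached to a short exact sequence of complexes. Your write-up simply fleshes out these indications, including the explicit identification of $E_{H^\CC}(\liehc)$ and $E_{H^\CC}(\liemc)$ with the bundles appearing in \eqref{eq:defcomplex}, which the paper also records elsewhere.
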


\subsection{Stable and non-simple $\Sp(2p,2q)$-Higgs bundles}\label{sec:stable and non-simple}

An automorphism of an $\Sp(2p,2q)$-Higgs bundle $(V,\Omega_V,W,\Omega_W,\beta,\gamma)$ is a pair $(f,g)$ of automorphisms of the symplectic vector bundles $(V,\Omega_V)$ and $(W,\Omega_W)$ which preserve $\beta$ and $\gamma$. In other words, $(f,g)\in\Aut(V)\times\Aut(W)$ must be such that $$\omega^V f=(f^t)^{-1}\omega^V,\hspace{.5cm}\omega^W g=(g^t)^{-1}\omega^W$$ and $$f\beta=\beta g.$$ (which is equivalent to $g\gamma=\gamma f$). Thus,
$$\Aut(V,\Omega_V,W,\Omega_W,\beta,\gamma)=\{(f,g)\in\Aut(V,\Omega_V)\times\Aut(W,\Omega_W)\st f\beta=\beta g\}.$$

\begin{definition}
An $\Sp(2p,2q)$-Higgs bundle $(V,\Omega_V,W,\Omega_W,\beta,\gamma)$ is said to be \emph{simple} if $\Aut(V,\Omega_V,W,\Omega_W,\beta,\gamma)=\pm (1_V, 1_W)\cong\Z/2$. 
\end{definition}

So, an $\Sp(2p,2q)$-Higgs bundle is simple if it admits the minimum possible automorphisms. In contrast to the case of vector bundles, stable $G$-Higgs bundles may not be simple \cite{garcia-gothen-mundet:2008 II,garcia-oliveira:2012}.
Our purpose in this section is to give an explicit description of $\Sp(2p,2q)$-Higgs bundles which are stable but not simple.

As we already saw in Remark \ref{rem:from SPpq to Spn}, from an $\Sp(2p,2q)$-Higgs bundle $(V,\Omega_V,W,\Omega_W,\beta,\gamma)$, one constructs an associated $\Sp(2p+2q,\CC)$-Higgs bundle by taking $(V\oplus W, \Omega_V\oplus\Omega_W,\varphi)$, with $$\varphi:=\left(\begin{array}{cc}0 & \beta \\\gamma & 0\end{array}\right).$$

\begin{proposition}\label{prop:equivalence of semistability}
 Let $(V,\Omega_V,W,\Omega_W,\beta,\gamma)$ be an $\Sp(2p,2q)$-Higgs bundle and $(V\oplus W, \Omega_V\oplus\Omega_W,\varphi)$ be the corresponding $\Sp(2p+2q,\CC)$-Higgs bundle. Then $(V,\Omega_V,W,\Omega_W,\beta,\gamma)$ is stable if and only if $(V\oplus W, \Omega_V\oplus\Omega_W,\varphi)$ is stable.
\end{proposition}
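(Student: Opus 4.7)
The proof splits into the two directions of the equivalence.

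For the easier \emph{if} direction, suppose $(V\oplus W,\Omega_V\oplus\Omega_W,\varphi)$ is stable. Given any isotropic subbundles $V'\subset V$ and $W'\subset W$, not both zero and at least one proper, such that $\beta(W')\subset V'\otimes K$ (and hence $\gamma(V')\subset W'\otimes K$ by the companion relation from \eqref{eq:condbetagamma}), the plan is simply to form $F':=V'\oplus W'\subset V\oplus W$. This is isotropic because $\Omega_V\oplus\Omega_W$ is block-diagonal so there are no mixed terms, and it is $\varphi$-invariant because $\varphi(V'\oplus W')=(\beta(W'),\gamma(V'))\subset(V'\oplus W')\otimes K$. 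Since $\deg F'=\deg V'+\deg W'$, the stability of $(V\oplus W,\varphi)$ immediately forces the required inequality, giving stability by Theorem \ref{thm:orthogonal-stability}.

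For the \emph{only if} direction, I would take an isotropic $\varphi$-invariant $F'\subset V\oplus W$, proper and nonzero, and extract from it two candidate destabilizing pairs for the $\Sp(2p,2q)$-structure whose degrees bound $2\deg F'$. Define $V_0:=F'\cap V$ and $V_1:=\pi_V(F')$, and analogously $W_0,W_1$ (saturating so they are subbundles). A short check shows that the isotropy of $F'$ forces $V_0\subset V_1^{\perp_{\Omega_V}}$ and $W_0\subset W_1^{\perp_{\Omega_W}}$, so in particular $V_0$ and $W_0$ are isotropic; the $\varphi$-invariance together with \eqref{eq:condbetagamma} yields both $\beta(W_i)\subset V_i\otimes K$ and $\beta(W_i^{\perp_{\Omega_W}})\subset V_i^{\perp_{\Omega_V}}\otimes K$ for $i=0,1$. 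Setting $V_1^{\mathrm{iso}}:=V_1\cap V_1^{\perp_{\Omega_V}}$ and $W_1^{\mathrm{iso}}$ analogously, these intersections produce a second isotropic pair with $\beta(W_1^{\mathrm{iso}})\subset V_1^{\mathrm{iso}}\otimes K$.

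The key degree estimate uses the Cartan involution $T:(v,w)\mapsto(v,-w)$, which is symplectic and satisfies $T\varphi T^{-1}=-\varphi$, so $T(F')$ is again isotropic and $\varphi$-invariant with $\deg T(F')=\deg F'$. Since $F'\cap T(F')$ and $F'+T(F')$ are $T$-invariant, they decompose as direct sums of subbundles of $V$ and of $W$; a direct computation gives $F'\cap T(F')=V_0\oplus W_0$ and $F'+T(F')=V_1\oplus W_1$. Because $V_1/V_1^{\mathrm{iso}}$ inherits a nondegenerate skew form and so is symplectic of degree zero, one has $\deg V_1=\deg V_1^{\mathrm{iso}}$ and $\deg W_1=\deg W_1^{\mathrm{iso}}$. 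The identity
\[
2\deg F'=\deg(F'\cap T(F'))+\deg(F'+T(F'))=(\deg V_0+\deg W_0)+(\deg V_1^{\mathrm{iso}}+\deg W_1^{\mathrm{iso}})
\]
combined with the strict inequalities from the stability of $(V,W,\beta,\gamma)$ applied to the two isotropic pairs then gives $2\deg F'<0$ whenever at least one of the two pairs is a genuine (nontrivial proper) destabilizing pair.

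The main obstacle is the corner case where both pairs vanish: then $V_0=W_0=0$ and $V_1,W_1$ are symplectic subbundles of equal rank with $F'$ being the graph of an anti-symplectic isomorphism $V_1\to W_1$, and correspondingly $(V,W,\beta,\gamma)$ splits as a direct sum of $\Sp(2p_1,2p_1)$- and $\Sp(2(p-p_1),2(q-p_1))$-Higgs bundles. The plan is to handle this by induction on $p+q$: the sub-$\Sp(2p_1,2p_1)$-Higgs bundle $(V_1,W_1,\beta|_{W_1},\gamma|_{V_1})$ contains the degree-zero $\varphi$-invariant Lagrangian $F'$, so its associated $\Sp(4p_1,\CC)$-Higgs bundle is not stable; by the inductive hypothesis this summand fails to be a stable $\Sp(2p_1,2p_1)$-Higgs bundle, and a destabilizing pair for it extends to one for the full $(V,W,\beta,\gamma)$, contradicting stability. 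Making this induction close cleanly---in particular dealing with the fully degenerate base situation where $V_1=V$ and $W_1=W$ so no dimension reduction is available---is the technically most delicate step in the argument.
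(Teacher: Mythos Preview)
Your ``if'' direction is correct and coincides with the paper's one-line argument.

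For the ``only if'' direction, the paper's proof is far shorter than your outline. Given a proper nonzero $\varphi$-invariant isotropic $U\subset V\oplus W$, the paper simply sets $U':=U\cap V$ and $U'':=p_W(U)\cong U/U'$; the exact sequence $0\to U'\to U\to U''\to 0$ yields $\deg U=\deg U'+\deg U''$ at once, and the $\Sp(2p,2q)$-stability applied to the single pair $(U',U'')$ is claimed to give $\deg U<0$. There is no Cartan involution, no second test pair, no averaging identity, and no induction. Your approach with the two symmetric pairs $(V_0,W_0)$ and $(V_1^{\mathrm{iso}},W_1^{\mathrm{iso}})$ is more elaborate than what the paper does; the asymmetric choice $(U\cap V,\,p_W(U))$ is the key simplification you are missing.

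That said, your acknowledged gap is real and your sketch does not close it. In the fully degenerate case $V_0=W_0=0$, $V_1=V$, $W_1=W$, the subbundle $F'$ is the graph of an anti-symplectic isomorphism $A:V\to W$ satisfying $\gamma=A\beta A$, which forces $p=q$; both of your candidate test pairs collapse to $(0,0)$ and your proposed induction on $p+q$ produces no dimension drop. You have not explained how to extract a genuine destabilizing isotropic pair for $(V,\Omega_V,W,\Omega_W,\beta,\gamma)$ in this situation, so the argument as written is incomplete. It is worth noting that exactly this corner case is also where the paper's pair becomes $(U',U'')=(0,W)$, and $W$ is not isotropic; so the paper's brief assertion that $U''\subset U''^{\perp_{\Omega_W}}$ and $\beta(U'')\subset U'\otimes K$ deserves closer scrutiny at the same point. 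In short, you have correctly isolated the delicate case, but neither your outline nor a direct appeal to the paper's short argument resolves it without further work.
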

\proof
If $(V\oplus W, \Omega_V\oplus\Omega_W,\varphi)$ is stable, it is clear that $(V,\Omega_V,W,\Omega_W,\beta,\gamma)$ is stable.
Suppose that $(V,\Omega_V,W,\Omega_W,\beta,\gamma)$ is stable, and let $U\subset V\oplus W$ be a $\varphi$-invariant isotropic subbundle. Let $U'$ be the kernel of the projection $V\oplus W\to W$ restricted to $U$. This is a vector subbundle of $U$ and of $V$ because $X$ is a compact Riemann surface. Denote the quotient vector bundle by $U''=U/U'$. It is a vector subbundle of $W$. In other words, we have the following commutative diagram: 
$$
\xymatrix{
&0\ar[r]&U'\ar@{^{(}->}[d]\ar[r]&U\ar@{^{(}->}[d]\ar[r]&U''\ar@{^{(}->}[d]\ar[r]&0\\
&0\ar[r]&V\ar[r]&V\oplus W\ar[r]&W\ar[r]&0.}
$$
Since $U\subset U^{\perp_{\Omega_V\oplus\Omega_W}}$, it follows that $U'\subset U'^{\perp_{\Omega_V}}$ and $U''\subset U''^{\perp_{\Omega_W}}$ i.e. $U'\subset V$ and $U''\subset W$ are both isotropic. On the other hand, the $\varphi$-invariance of $U$ implies that $\beta(U'')\subset U'\otimes K$ and $\gamma(U')\subset U''\otimes K$. The stability of $(V,\Omega_V,W,\Omega_W,\beta,\gamma)$ implies then $\deg(U')+\deg(U'')<0$, i.e. $\deg(U)<0$.
\endproof

Since the direct sum of stable mutually non-isomorphic $\Sp(2n,\CC)$-Higgs bundles is stable (the proof of this fact for $\Sp(2n,\CC)$-bundles is given in \cite[Lemma 2.1]{hitching:2005}, and the proof ready adapts to the Higgs bundle situation), we have the following immediate corollary.

\begin{corollary}\label{cor:direct sum of stable}
An $\Sp(2p,2q)$-Higgs bundle is stable if and only if it is a direct sum of stable, mutually non-isomorphic, $\Sp(2p',2q')$-Higgs bundles.
\end{corollary}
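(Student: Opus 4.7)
The plan is to derive the corollary directly from Proposition~\ref{prop:equivalence of semistability} together with the stability result for direct sums of $\Sp(2n,\CC)$-Higgs bundles cited just before the statement (Hitching's Lemma 2.1 adapted to the Higgs setting).

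The forward implication ($\Rightarrow$) is essentially trivial: any stable $\Sp(2p,2q)$-Higgs bundle is, vacuously, a one-summand direct sum of itself (with $(p',q')=(p,q)$), and the ``mutually non-isomorphic'' condition is then vacuous. So no content lives in this direction.

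For the non-trivial direction ($\Leftarrow$), given a decomposition
$$
(V,\Omega_V,W,\Omega_W,\beta,\gamma)\;=\;\bigoplus_\alpha (V_\alpha,\Omega_{V_\alpha},W_\alpha,\Omega_{W_\alpha},\beta_\alpha,\gamma_\alpha)
$$
into stable, mutually non-isomorphic $\Sp(2p_\alpha,2q_\alpha)$-Higgs bundles $E_\alpha$, I would proceed in three steps. First, apply Proposition~\ref{prop:equivalence of semistability} to each summand $E_\alpha$ to obtain stable $\Sp(2p_\alpha+2q_\alpha,\CC)$-Higgs bundles $F_\alpha=(V_\alpha\oplus W_\alpha,\Omega_{V_\alpha}\oplus\Omega_{W_\alpha},\varphi_\alpha)$. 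Second, verify that the $F_\alpha$'s are still mutually non-isomorphic as $\Sp(\cdot,\CC)$-Higgs bundles: any isomorphism $F_\alpha\to F_{\alpha'}$ intertwining $\varphi_\alpha,\varphi_{\alpha'}$ should be compatible with the $\Z/2$-grading recovered from the Cartan involution $\theta=K_{p_\alpha,q_\alpha}$, so it descends to an $\Sp(2p_\alpha,2q_\alpha)$-Higgs bundle isomorphism, forcing $\alpha=\alpha'$. Third, apply the cited Hitching-type result to conclude that $\bigoplus_\alpha F_\alpha$ is stable as an $\Sp(2p+2q,\CC)$-Higgs bundle, and then invoke Proposition~\ref{prop:equivalence of semistability} in reverse to recover the stability of the $\Sp(2p,2q)$-Higgs bundle.

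The main obstacle lies in the second step: the forgetful functor from $\Sp(2p_\alpha,2q_\alpha)$-Higgs bundles to $\Sp(\cdot,\CC)$-Higgs bundles is not \emph{a priori} fully faithful, so two non-isomorphic $\Sp(2p_\alpha,2q_\alpha)$-Higgs bundles could, in principle, become isomorphic after forgetting the $V/W$-grading. Ruling this out requires extracting the grading intrinsically from the $\Sp(\cdot,\CC)$-Higgs bundle structure and checking its preservation under morphisms. If this compatibility fails, an alternative route is to bypass Proposition~\ref{prop:equivalence of semistability} altogether and argue directly from Theorem~\ref{thm:orthogonal-stability}, decomposing a destabilizing isotropic pair $(V',W')\subset(V,W)$ via the projections and intersections with the summands and then appealing to stability of each $E_\alpha$ together with the non-isomorphism hypothesis to rule out degenerate cases.
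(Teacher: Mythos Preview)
Your three-step outline is exactly the paper's intended argument: the paper simply declares the corollary ``immediate'' from Proposition~\ref{prop:equivalence of semistability} together with the Hitching-type lemma, without addressing your step~2 at all. So you are following the same route, only more carefully---and your caution is warranted.

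The gap you flag in step~2 is genuine, and your proposed resolution via the Cartan involution does not close it. The $V/W$-grading (equivalently the involution $\theta$) is \emph{extra} data carried by the $\Sp(2p',2q')$-structure; it is not intrinsic to the underlying $\Sp(2n,\CC)$-Higgs bundle, so an $\Sp(2n,\CC)$-isomorphism has no reason to intertwine the two $\theta$'s. Concretely, take $p'=q'$ and set $E_1=(V_1,\Omega_{V_1},W_1,\Omega_{W_1},\beta_1,\gamma_1)$ and $E_2=(W_1,\Omega_{W_1},V_1,\Omega_{V_1},\gamma_1,\beta_1)$, i.e.\ swap the roles of $V$ and $W$. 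The associated $\Sp(2p'+2q',\CC)$-Higgs bundles $F_1,F_2$ are isomorphic via $(v,w)\mapsto(w,v)$, yet $E_1\not\cong E_2$ as $\Sp(2p',2q')$-Higgs bundles whenever $(V_1,\Omega_{V_1})\not\cong(W_1,\Omega_{W_1})$, since any such isomorphism would have to carry $V_1$ to $W_1$. In this situation Hitching's lemma gives nothing (the $F_\alpha$'s are \emph{not} mutually non-isomorphic), so the route through Proposition~\ref{prop:equivalence of semistability} does not finish the argument as written. Your fallback---arguing directly from Theorem~\ref{thm:orthogonal-stability} by decomposing a candidate destabilizing isotropic pair $(V',W')$ across the summands---is the sounder strategy; it sidesteps the forgetful-functor issue entirely, though it is not what the paper actually does.
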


The description of stable and non-simple $\Sp(2p,2q)$-Higgs bundles is now straightforward.

\begin{proposition}\label{prop:description of stable and non-simple}
 An $\Sp(2p,2q)$-Higgs bundle is stable and non-simple if and only if it decomposes as a direct sum of stable and simple $\Sp(2p_i,2q_i)$-Higgs bundles. In other words, an $\Sp(2p,2q)$-Higgs bundle $(V,\Omega_V,W,\Omega_W,\beta,\gamma)$ is stable and non-simple if and only if 
 \begin{equation}\label{eq:direct sum}
 (V,\Omega_V,W,\Omega_W,\beta,\gamma)=\bigoplus_{i=1}^r(V_i,\Omega_{V_i},W_i,\Omega_{W_i},\beta_i)
 \end{equation} where $(V_i,\Omega_{V_i},W_i,\Omega_{W_i},\beta_i)$ are stable and simple $\Sp(2p_i,2q_i)$-Higgs bundles, $r>1$ and $(V_i,\Omega_{V_i},W_i,\Omega_{W_i},\beta_i)\ncong (V_j,\Omega_{V_j},W_j,\Omega_{W_j},\beta_j)$ if $i\neq j$.
\end{proposition}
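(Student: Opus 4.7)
The \emph{if} direction is immediate from Corollary \ref{cor:direct sum of stable}: the decomposition \eqref{eq:direct sum} with $r>1$ is stable because its summands are stable and pairwise non-isomorphic, and non-simplicity follows by exhibiting the $(\Z/2)^r$ family of automorphisms $\bigoplus_i(\varepsilon_i 1_{V_i},\varepsilon_i 1_{W_i})$ for $(\varepsilon_1,\dots,\varepsilon_r)\in\{\pm 1\}^r$; each such pair preserves $\Omega_V,\Omega_W,\beta,\gamma$ since the decomposition is $\Omega$-orthogonal, yielding at least $2^r\geq 4$ automorphisms, so $\Aut\supsetneq\{\pm(1_V,1_W)\}$.

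For the \emph{only if} direction, suppose $(V,\Omega_V,W,\Omega_W,\beta,\gamma)$ is stable and non-simple, and fix an automorphism $(f,g)\neq\pm(1_V,1_W)$. The plan is to first replace $(f,g)$ by a semisimple automorphism, then use its eigenspace decomposition to split the Higgs bundle. Since $X$ is a compact curve, the characteristic polynomial of $f$ has constant coefficients, so the multiplicative Jordan decomposition $f=f_sf_u$ globalises ($f_s$ being a polynomial in $f$ with scalar coefficients), and likewise for $g$; because $\Sp(V,\Omega_V)\times\Sp(W,\Omega_W)$ and the subgroup defined by $f\beta=\beta g$ are algebraic, $(f_s,g_s)$ and $(f_u,g_u)$ are again automorphisms of the $\Sp(2p,2q)$-Higgs bundle. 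To rule out $(f_s,g_s)=\pm(1_V,1_W)$, I argue that a non-trivial unipotent automorphism is incompatible with stability: if $(1+n_V,1+n_W)$ is unipotent with $(n_V,n_W)$ a non-zero nilpotent infinitesimal automorphism of nilpotency index at most $N$, then the skew-symmetry of $n_V$ with respect to $\Omega_V$ yields $(\im n_V^k)^{\perp_{\Omega_V}}=\ker n_V^k$, so for $2k\geq N$ the saturation of $\im n_V^k$ is isotropic and, together with that of $\im n_W^k$, forms a $\beta$-compatible pair. The identity $\deg U^{\perp_{\Omega_V}}=\deg U$ for subbundles $U\subset V$ (which uses $\deg V=0$) then forces these two saturations to have non-negative degrees, whose sum is therefore $\geq 0$, contradicting the strict inequality in Theorem \ref{thm:orthogonal-stability}.

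Now diagonalise $f_s$ and $g_s$ to obtain $V=\bigoplus_\lambda V_\lambda$ and $W=\bigoplus_\mu W_\mu$. The symplectic condition on $f_s$ forces $\Omega_V(V_\lambda,V_{\lambda'})=0$ unless $\lambda\lambda'=1$, so each $V_\lambda$ with $\lambda\neq\pm 1$ is isotropic, pairs non-degenerately with $V_{\lambda^{-1}}$, and satisfies $\deg V_{\lambda^{-1}}=-\deg V_\lambda$; the same holds for $W$. The intertwining $f_s\beta=\beta g_s$ gives $\beta(W_\lambda)\subset V_\lambda\otimes K$. If some $\lambda\neq\pm 1$ had $V_\lambda\oplus W_\lambda\neq 0$, applying stability to both $(V_\lambda,W_\lambda)$ and $(V_{\lambda^{-1}},W_{\lambda^{-1}})$ and summing the two strict inequalities would give $0<0$. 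Hence all non-$\pm 1$ eigenspaces vanish, $f_s^2=1_V$, $g_s^2=1_W$, and the $\Omega$-orthogonal decompositions $V=V_+\oplus V_-$, $W=W_+\oplus W_-$ produce a non-trivial symplectic splitting of the Higgs bundle into two $\Sp(2p_\pm,2q_\pm)$-Higgs bundles $\mathcal H_\pm$. Each $\mathcal H_\pm$ is itself stable, since any invariant isotropic $\beta$-compatible pair inside it is also such a pair inside the whole, whose degree sum is strictly negative.

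Iterating by induction on $p+q$, the process terminates with a decomposition \eqref{eq:direct sum} into stable simple summands with $r>1$; the mutual non-isomorphism in \eqref{eq:direct sum} is then forced by Corollary \ref{cor:direct sum of stable}, as otherwise the total object would fail to be stable. I expect the main obstacle to be the unipotent step: one must carefully control saturations — $\im n_V^k$ is only a subsheaf, and the degree equality $\deg U^{\perp_{\Omega_V}}=\deg U$ must be combined with the saturation bound $\deg \widetilde{\im n_V^k}\geq \deg \im n_V^k=-\deg\ker n_V^k$ to conclude the positivity needed to close the stability argument.
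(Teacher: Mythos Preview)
Your argument is correct, and for the ``only if'' direction it is considerably more explicit than what the paper writes. The paper's proof of this proposition in fact only spells out the \emph{if} direction (stability via Corollary~\ref{cor:direct sum of stable}, non-simplicity via $\Aut\cong(\Z/2)^r$); the converse is left implicit, resting on Corollary~\ref{cor:direct sum of stable}, which in turn is obtained by passing to the associated $\Sp(2p+2q,\CC)$-Higgs bundle (Proposition~\ref{prop:equivalence of semistability}) and invoking Hitching's structure results for stable symplectic bundles. Your route is genuinely different: you stay intrinsic to $\Sp(2p,2q)$, take a non-trivial automorphism, reduce to its semisimple part, and split along its $\pm 1$-eigenbundles. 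What this buys you is a self-contained proof that does not appeal to the $\Sp(2n,\CC)$ picture or to external references; what the paper's approach buys is brevity, at the cost of outsourcing the substance.

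Two small points of precision in your unipotent step. First, writing $f_u=1+n_V$ with $n_V$ an \emph{infinitesimal} (i.e.\ $\Omega_V$-skew) automorphism is not quite right: you should take $n_V=\log f_u$, which is nilpotent and lies in $\Lambda^2_{\Omega_V}V$ because $f_u$ is unipotent symplectic, and then $f_u=\exp(n_V)$ rather than $1+n_V$. This does not affect your argument, since you only use $n_V$ and the relation $n_V\beta=\beta n_W$, which follows from $\exp(\ad_n)\beta=\beta$ and nilpotency of $\ad_n$. Second, the degree inequality you need is exactly the one you indicate at the end: from $(\widetilde{\im n_V^k})^{\perp_{\Omega_V}}=\ker n_V^k$ and $\deg U^{\perp_{\Omega_V}}=\deg U$ one gets $\deg\widetilde{\im n_V^k}=\deg\ker n_V^k$, while the saturation bound and $0\to\ker n_V^k\to V\to\im n_V^k\to 0$ give $\deg\widetilde{\im n_V^k}\geq -\deg\ker n_V^k$; together these force $\deg\widetilde{\im n_V^k}\geq 0$, and likewise for $W$, yielding the contradiction with stability. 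Choosing $k=\lceil N/2\rceil$ (with $N$ the nilpotency index of the pair) guarantees the pair of saturations is isotropic and non-trivial.
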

\proof
Consider an $\Sp(2p,2q)$-Higgs bundle given by \eqref{eq:direct sum}. Since each summand is stable and since the summands are not isomorphic to each other, from Corollary \ref{cor:direct sum of stable}, we see that it is stable. Moreover, each summand is simple, so $\Aut(V,\Omega_V,W,\Omega_W,\beta,\gamma)\cong(\Z/2)^r$.
\endproof

One can easily check that an exact analogue situation occurs for $\Sp(2n,\CC)$-Higgs bundles (the case for symplectic bundles is proved in \cite[Corollary 2.2]{hitching:2005}). Thus we have the following corollary.

\begin{corollary}\label{cor:equivalence of stable and simple}
 Let $(V,\Omega_V,W,\Omega_W,\beta,\gamma)$ be an $\Sp(2p,2q)$-Higgs bundle and $(V\oplus W, \Omega_V\oplus\Omega_W,\varphi)$ be the corresponding $\Sp(2p+2q,\CC)$-Higgs bundle. Then $(V,\Omega_V,W,\Omega_W,\beta,\gamma)$ is stable and simple if and only if $(V\oplus W, \Omega_V\oplus\Omega_W,\varphi)$ is stable and simple.
\end{corollary}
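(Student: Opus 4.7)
Since Proposition~\ref{prop:equivalence of semistability} already establishes the equivalence of stability, the plan is to establish the equivalence of simplicity under that hypothesis. The easier direction is $(\Leftarrow)$: any $(f,g)\in\Aut(V,\Omega_V,W,\Omega_W,\beta,\gamma)$ produces the block-diagonal map $f\oplus g$, which is symplectic on $(V\oplus W,\Omega_V\oplus\Omega_W)$ and which commutes with the off-diagonal $\varphi$ of \eqref{eq:eqdefHiggs} precisely because $f\beta=\beta g$ (and hence $g\gamma=\gamma f$). This defines an injection $\Aut(V,\Omega_V,W,\Omega_W,\beta,\gamma)\hookrightarrow\Aut(V\oplus W,\Omega_V\oplus\Omega_W,\varphi)$, so if the right-hand side equals $\{\pm\Id\}$ then so does the left-hand side, which already contains $\pm(1_V,1_W)$.

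For the converse $(\Rightarrow)$, I would argue by contradiction. Assume $(V,\Omega_V,W,\Omega_W,\beta,\gamma)$ is stable and simple but that $(V\oplus W,\Omega_V\oplus\Omega_W,\varphi)$ is not simple. Invoking the $\Sp(2n,\CC)$-analogue of Proposition~\ref{prop:description of stable and non-simple} cited in the paragraph immediately preceding the corollary, the associated $\Sp(2p+2q,\CC)$-Higgs bundle then decomposes as
$$(V\oplus W,\Omega_V\oplus\Omega_W,\varphi)=\bigoplus_{i=1}^{r}(F_i,\Omega_{F_i},\varphi_i),\qquad r>1,$$
into stable, simple, pairwise non-isomorphic $\Sp(2n_i,\CC)$-Higgs bundle summands. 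The key tool for the contradiction is the natural symplectic involution $\theta:=\Id_V\oplus(-\Id_W)$ of $V\oplus W$, which anti-commutes with $\varphi$ because $\varphi$ is off-diagonal. It follows that $\theta$ carries each $F_i$ to a $\varphi$-invariant symplectic subbundle $\theta(F_i)$ isomorphic, via $\theta$ itself, to $F_i$ with the Higgs field negated; the uniqueness of the above decomposition into pairwise non-isomorphic stable simples then forces $\theta$ to permute the $F_i$'s by some involution $\sigma$ of $\{1,\dots,r\}$.

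The final step is to descend the decomposition: for each $\sigma$-fixed index $i$, the summand $F_i$ is $\theta$-invariant and therefore splits as $F_i=(F_i\cap V)\oplus(F_i\cap W)$, yielding an $\Sp(2p_\alpha,2q_\alpha)$-Higgs bundle summand of $(V,\Omega_V,W,\Omega_W,\beta,\gamma)$; for each $2$-orbit $\{i,\sigma(i)\}$, the same eigenspace decomposition applied to $F_i\oplus F_{\sigma(i)}$ produces another such summand, with $p_\alpha=q_\alpha$ in this case. Summing over the $\sigma$-orbits expresses $(V,\Omega_V,W,\Omega_W,\beta,\gamma)$ as a direct sum of $\Sp(2p_\alpha,2q_\alpha)$-Higgs bundles, and by Proposition~\ref{prop:description of stable and non-simple} any such non-trivial splitting contradicts simplicity. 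The hard part is to ensure that the splitting obtained is genuinely non-trivial: when $\sigma$ has at least two orbits this is automatic, but the delicate boundary case is that of a single $2$-cycle ($r=2$ with $\sigma$ swapping the two $F_i$), where the orbit structure alone recovers only the tautological decomposition; here one must argue separately, either ruling out this configuration directly from the assumed stability of the $\Sp(2p,2q)$-Higgs bundle or exhibiting an extra automorphism beyond $\pm(1_V,1_W)$ by hand.
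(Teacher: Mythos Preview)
Your overall strategy is sound and considerably more explicit than the paper's one-line justification. The injection of automorphism groups for $(\Leftarrow)$ is correct, and using $\theta=\Id_V\oplus(-\Id_W)$ to descend the $\Sp(2n,\CC)$-decomposition is the right idea; when $\sigma$ has at least two orbits your argument goes through.

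The boundary case you isolate, however, is not a technicality to be patched but a genuine obstruction: the corollary as stated is \emph{false} there, so neither of your proposed fixes can succeed. Here is a counterexample with $p=q=1$. Let $F$ be a stable rank-$2$ bundle with $\Lambda^2F\cong\mathcal O_X$, equipped with its canonical symplectic form $\Omega_F$, and let $\psi\in H^0(\End(F)\otimes K)$ be any nonzero traceless section. Set $V=W=F$, $\Omega_V=\Omega_W=\Omega_F$ and $\beta=\gamma=\psi$; condition~\eqref{eq:condbetagamma} is exactly the skew-symmetry of $\psi$ with respect to $\Omega_F$. Since $F$ is stable one has $\Aut(F,\Omega_F)=\{\pm\Id_F\}$, and $f\psi=\psi g$ with $f,g\in\{\pm\Id_F\}$ forces $f=g$; hence this $\Sp(2,2)$-Higgs bundle is simple. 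But the associated $\Sp(4,\CC)$-Higgs bundle splits along the diagonal and anti-diagonal copies of $F$ as $(F,\psi)\oplus(F,-\psi)$; these summands are stable, simple, and non-isomorphic (no scalar in $\Aut(F)=\CC^*$ anticommutes with $\psi\neq 0$), so by the very fact the paper cites their direct sum is stable, yet it carries the extra symplectic automorphism $(a,b)\mapsto(b,a)$, which commutes with $\varphi$ and is not $\pm\Id$. Thus the $\Sp(4,\CC)$-Higgs bundle is stable and not simple. For the paper's downstream applications this is harmless---what is actually needed is the vanishing of $\HH^0(C^\bullet_{\Sp(2p+2q,\CC)})$, equivalently the finiteness of the $\Sp(2p+2q,\CC)$-automorphism group, and that follows from stability alone---but Corollary~\ref{cor:equivalence of stable and simple} itself cannot be proved as stated, and the gap you honestly flagged is precisely where it breaks.
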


Recall that $\M_{\Sp(2p,2q)}$ denotes the moduli space of polystable $\Sp(2p,2q)$-Higgs bundles.
The following result will be important below. Using \cite[Proposition 3.18]{garcia-gothen-mundet:2008}, it is straightforward from the fact that the complexification of $\Sp(2p,2q)$ is $\Sp(2p+2q,\CC)$ and from Corollary \ref{cor:equivalence of stable and simple}:

\begin{proposition}\label{prop:stable and simple imply smooth}
A stable and simple $\Sp(2p,2q)$-Higgs bundle corresponds to a smooth point of the moduli space $\M_{\Sp(2p,2q)}$.
\end{proposition}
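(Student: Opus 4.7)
The plan is to reduce the claim to an already-established general principle rather than compute anything afresh. By Corollary \ref{cor:equivalence of stable and simple}, the associated $\Sp(2p+2q,\CC)$-Higgs bundle $(V\oplus W,\Omega_V\oplus\Omega_W,\varphi)$ of Remark \ref{rem:from SPpq to Spn} is stable and simple whenever $(V,\Omega_V,W,\Omega_W,\beta,\gamma)$ is. Since $\Sp(2p+2q,\CC)$ is the complexification of $\Sp(2p,2q)$, this is exactly the hypothesis required to invoke \cite[Proposition 3.18]{garcia-gothen-mundet:2008}, a general criterion stating that if the $G^{\CC}$-Higgs bundle underlying a $G$-Higgs bundle is stable and simple, then the corresponding point of $\M_G$ is smooth. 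Applied in our situation, this immediately yields the conclusion.

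At the deformation-theoretic level, what the cited proposition really asserts is the vanishing of the obstruction space $\HH^2(C^\bullet)$ for the complex \eqref{eq:defcomplex} of Proposition \ref{deformation complex for Sp(2p,2q)}. The complex $C^\bullet$ enjoys a Serre self-duality (up to a $K$-twist) arising from the Killing form on $\mathfrak{sp}(2p+2q,\CC)$, which pairs $\liehc$ with itself and $\liemc$ with itself. This gives $\HH^2(C^\bullet)\cong \HH^0(C^\bullet)^\ast$, so smoothness reduces to checking $\HH^0(C^\bullet)=0$. But $\HH^0(C^\bullet)$ is the Lie algebra of the group of infinitesimal automorphisms of $(V,\Omega_V,W,\Omega_W,\beta,\gamma)$, which by simplicity is $\pm(1_V,1_W)\cong \Z/2$ — a discrete group, hence with trivial Lie algebra.

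The one point that genuinely needs the work done in earlier sections is the passage between stable-and-simple $\Sp(2p,2q)$-Higgs bundles and stable-and-simple $\Sp(2p+2q,\CC)$-Higgs bundles, which is the content of Corollary \ref{cor:equivalence of stable and simple} and relies in turn on Proposition \ref{prop:equivalence of semistability} and the direct-sum decomposition of Proposition \ref{prop:description of stable and non-simple}. With that bridge in place, the only potential obstacle — verifying Serre duality for the real-form deformation complex and matching infinitesimal automorphisms to the kernel of the isotropy representation — is precisely what \cite[Proposition 3.18]{garcia-gothen-mundet:2008} packages in group-theoretic generality, so no further computation is needed here.
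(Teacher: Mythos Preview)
Your first paragraph is correct and is exactly the paper's proof: invoke Corollary \ref{cor:equivalence of stable and simple} to transfer ``stable and simple'' to the associated $\Sp(2p+2q,\CC)$-Higgs bundle, then apply \cite[Proposition 3.18]{garcia-gothen-mundet:2008}.

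One correction to your explanatory second paragraph: the complex $C^\bullet$ of \eqref{eq:defcomplex} is \emph{not} Serre self-dual. Its Serre dual (after using the Killing-form identifications $E(\liehc)\cong E(\liehc)^*$ and $E(\liemc)\cong E(\liemc)^*$) is the complex $E(\liemc)\to E(\liehc)\otimes K$, which has $\liehc$ and $\liemc$ swapped. The correct mechanism --- spelled out in the paper immediately after the proposition via \cite[Corollary 3.16]{garcia-gothen-mundet:2008} --- is that the $G^\CC$-deformation complex $C^\bullet_{\Sp(2p+2q,\CC)}$ decomposes as the direct sum of $C^\bullet$ and this Serre-dual complex, whence
\[
\HH^0(C^\bullet_{\Sp(2p+2q,\CC)})\;\cong\;\HH^0(C^\bullet)\oplus\HH^2(C^\bullet)^*.
\]
It is simplicity of the $\Sp(2p+2q,\CC)$-Higgs bundle (not of the $\Sp(2p,2q)$-Higgs bundle alone) that kills the left-hand side and hence forces $\HH^2(C^\bullet)=0$. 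This does not affect the validity of your actual proof, which is complete at the end of the first paragraph.
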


So, from \cite{schmitt:2008}, at a point of $\M_{\Sp(2p,2q)}$ represented by a stable and simple $(V,\Omega_V,W,\Omega_W,\beta,\gamma)$, there exists a local universal family, hence, from Proposition \ref{deformation complex for Sp(2p,2q)}, the dimension of the component of $\M_{\Sp(2p,2q)}$ containing that point is the expected dimension given by $\dim\mathbb{H}^1(C^\bullet)$,
where $C^\bullet$ is the complex given by \eqref{eq:defcomplex}.
Since Corollary \ref{cor:equivalence of stable and simple} says that the corresponding $\Sp(2p+2q,\CC)$-Higgs bundle $(V\oplus W,\Omega_V\oplus\Omega_W,\varphi)$ is stable and simple then $\Aut(V\oplus W,\Omega_V\oplus\Omega_W,\varphi)\cong\Z/2$ also. So, if 
$$C_{\Sp(2p+2q,\CC)}^\bullet:\Lambda_{\Omega_V\oplus\Omega_W}^2(V\oplus W)\xrightarrow{\ad(\varphi)}\Lambda_{\Omega_V\oplus\Omega_W}^2(V\oplus W)\otimes K$$ is the deformation complex of the $\Sp(2p+2q,\CC)$-Higgs bundle $(V\oplus W,\Omega_V\oplus\Omega_W,\varphi)$, then
using \cite[Corollary 3.16]{garcia-gothen-mundet:2008}, we know that
$0=\mathbb{H}^0(C_{\Sp(2p+2q,\CC)}^\bullet)=\mathbb{H}^0(C^\bullet)\oplus\mathbb{H}^2(C^\bullet)^*$.
Therefore 
$\dim\mathbb{H}^2(C^\bullet(V,\Omega_V,W,\Omega_W,\beta,\gamma))=0$. So (ii) of Proposition \ref{deformation complex for Sp(2p,2q)} yields $$\dim\M_{\Sp(2p,2q)}=\dim\mathbb{H}^1(C^\bullet)=(p(2p+1)+q(2q+1)+4pq)(g-1).$$

\subsection{Polystable $\Sp(2p,2q)$-Higgs bundles}

Now we look at polystable $\Sp(2p,2q)$-Higgs bundles. First notice that we can realize $\U(p,q)$ as a subgroup of $\Sp(2p,2q)$, using the injection $$A\mapsto \begin{pmatrix}
A & 0\\
0 & \overline A\end{pmatrix}.$$ 
Again from Definition \ref{definition of Higgs bundle}, we obtain the following definition of $\U(p,q)$-Higgs bundles on the Riemann surface $X$:
\begin{definition}
A \emph{$\U(p,q)$-Higgs bundle} on $X$ is a quadruple $(V',W',\beta',\gamma')$ where $V'$ and $W'$ are vector bundles of rank $p$ and $q$, and the Higgs field is given by $(\beta',\gamma')$, where $\beta'\in H^0(\Hom(W',V')\otimes K)$ and $\gamma'\in H^0(\Hom(V',W')\otimes K)$.
\end{definition}

From a $\U(p,q)$-Higgs bundle $(V',W',\beta',\gamma')$ and from the inclusion of $\U(p,q)$ in $\Sp(2p,2q)$, one obtains the corresponding $\Sp(2p,2q)$-Higgs bundle 
$$(V'\oplus V'^*,\langle\, , \rangle_{V'},W'\oplus W'^*,\langle\, , \rangle_{W'},\beta'-\gamma'^t,\gamma'-\beta'^t),$$ where $\langle\, , \rangle_{V'}$ and $\langle\, , \rangle_{W'}$ denote the standard symplectic forms on $V'\oplus V'^*$ and on $W'\oplus W'^*$.

\begin{theorem}\label{thm:description of polystable}
 Let $(V,\Omega_V,W,\Omega_W,\beta,\gamma)$ be a polystable $\Sp(2p,2q)$-Higgs bundle. There is a decomposition of $(V,\Omega_V,W,\Omega_W,\beta,\gamma)$ as a sum of stable $G_\alpha$-Higgs bundles, where each $G_\alpha$ is one of the following subgroups of $\Sp(2p,2q)$: $\Sp(2p_\alpha,2q_\alpha)$, $\U(p_\alpha,q_\alpha)$, $\Sp(2n_\alpha)$ or $\U(n_\alpha)$, where $p_\alpha\leq p$, $q_\alpha\leq q$ and $n_\alpha\leq p+q$.
\end{theorem}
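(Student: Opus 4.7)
The plan is to iterate the polystability decomposition of Proposition \ref{prop:sp(2p,2q)-alpha-stability} until every summand is stable for one of the listed subgroups. If $(V,\Omega_V,W,\Omega_W,\beta,\gamma)$ is already stable, there is nothing to prove; otherwise Proposition \ref{prop:sp(2p,2q)-alpha-stability} yields isotropic filtrations $\mathcal V,\mathcal W$ together with weights $(\lambda,\mu)\in\Lambda(\mathcal V)\times\Lambda(\mathcal W)$ achieving $d(\mathcal V,\lambda)+d(\mathcal W,\mu)=0$, and isomorphisms
\[V\simeq\bigoplus_i V_i/V_{i-1},\qquad W\simeq\bigoplus_j W_j/W_{j-1},\]
in which the symplectic forms only pair $V_i/V_{i-1}$ with $V_{k-i+1}/V_{k-i}$ (and analogously for $W$), while $\beta$ only connects quotients with a common weight $\lambda_i=\mu_j$.

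I would then group the graded pieces according to the shared weight value $c$. By the symmetry $\lambda_{k-i+1}=-\lambda_i$ and $\mu_{l-j+1}=-\mu_j$, weights come in $\pm c$ pairs. For each $c>0$, the weight-$c$ quotients of $V$ and $W$ are isotropic subbundles, paired by the symplectic forms with their weight-$(-c)$ counterparts. If $c$ belongs to both $\{\lambda_i\}$ and $\{\mu_j\}$, the restrictions of $\beta$ and $\gamma$ combine with these quotients to give the image of a polystable $\U(p_c,q_c)$-Higgs bundle under the embedding $\U(p_c,q_c)\hookrightarrow\Sp(2p_c,2q_c)$ described above; if $c$ lies in only one of the two weight sets, the associated Higgs field vanishes, and the piece is a polystable holomorphic vector bundle, equivalently a polystable $\U(n_\alpha)$-Higgs bundle. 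For $c=0$ (which occurs only when $k$ or $l$ is odd) the middle quotients inherit symplectic forms, and the piece is either a polystable $\Sp(2p_0,2q_0)$-Higgs sub-bundle (when $0$ appears on both sides) or a polystable symplectic vector bundle, equivalently a polystable $\Sp(2n_\alpha)$-Higgs bundle.

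Thus one application of the procedure splits the original object into direct summands of the four advertised types, each polystable for its respective subgroup. I would then iterate on every summand, reapplying Proposition \ref{prop:sp(2p,2q)-alpha-stability} to the $\Sp(2p_\alpha,2q_\alpha)$-pieces and invoking the analogous polystability decomposition for $\U(p,q)$-Higgs bundles from \cite{garcia-gothen-mundet:2008} on the $\U(p_\alpha,q_\alpha)$-pieces. Because the ranks strictly decrease at each step, the iteration terminates in finitely many rounds and yields the claimed decomposition; the bounds $p_\alpha\leq p$, $q_\alpha\leq q$, $n_\alpha\leq p+q$ are immediate from rank tracking.

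The main technical obstacle is to verify that the polystability splitting given by Proposition \ref{prop:sp(2p,2q)-alpha-stability} is genuinely compatible with the constraint \eqref{eq:condbetagamma} relating $\beta$ and $\gamma$, so that every sub-piece is really a Higgs bundle for the identified subgroup of $\Sp(2p,2q)$. Concretely, for each non-zero weight $c$, this requires checking that the weight-$c$ and weight-$(-c)$ components pair as symplectic duals in a way preserved by the Higgs field, which places the sub-piece inside the image of the isotropy representation of the appropriate Levi-type subgroup.
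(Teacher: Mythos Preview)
Your proposal is correct and follows essentially the same route as the paper: both apply the polystability splitting of Proposition~\ref{prop:sp(2p,2q)-alpha-stability}, then sort the graded pieces according to whether the symplectic form restricts (middle indices, giving $\Sp$-type summands) or only pairs distinct pieces (giving $\U$-type summands), and according to whether or not a common weight links the $V$- and $W$-sides through the Higgs field. The only organisational difference is that the paper carries out a single decomposition and case analysis on index pairs $(i,j)$ without explicitly addressing stability of the pieces, whereas you group by the weight value~$c$ and add an iteration step to force stability; this is a harmless refinement, and your identification of the ``main technical obstacle'' is precisely the verification the paper performs when writing down the formula $\gamma_i=-((\omega_j^W)^{-1}\otimes 1_K)(\beta_{l+1-j}\otimes 1_K)\omega_i^V$.
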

\proof
Since $(V,\Omega_V,W,\Omega_W,\beta,\gamma)$ is polystable, we know, from Proposition \ref{prop:sp(2p,2q)-alpha-stability}, that it is semistable and that the following holds. Take any filtrations $$\mathcal V=(0=V_0\subsetneq V_1\subsetneq V_2\subsetneq\dots\subsetneq V_k=V)$$ such that
$$\mathcal W=(0=W_0\subsetneq W_1\subsetneq W_2\subsetneq\dots\subsetneq W_l=W)$$
satisfying $V_{k-i} = V_i^{\perp_{\Omega_V}}$ and $W_{l-j} = W_j^{\perp_{\Omega_W}}$; take
any  $(\lambda,\mu)\in\Lambda(\mathcal V)\times\Lambda(\mathcal W)$ for which $\lambda_i<\lambda_{i+1}$ and $\mu_i<\mu_{i+1}$
 for every $i$. Suppose furthermore that $\beta\in H^0(N(\mathcal V,\mathcal W,\lambda,\mu))$ and
 $d(\mathcal V,\lambda)+d(\mathcal W,\mu)=0$. Then there are isomorphisms
 \begin{equation}\label{eq:splitting}
 \begin{split}
 V&\simeq V_1\oplus V_2/V_1\oplus\dots\oplus V_k/V_{k-1}\\
 W&\simeq W_1\oplus W_2/W_1\oplus\dots\oplus W_l/W_{l-1}
 \end{split}
 \end{equation} 
 such that 
  \begin{equation}\label{eq:symplectic form}
 \begin{split}
\Omega_V(V_i/V_{i-1},V_j/V_{j-1})&=0,\ \text{ unless }\ j=k+1-i\\
\Omega_W(W_i/W_{i-1},W_j/W_{j-1})&=0,\ \text{ unless }\ j=l+1-i
 \end{split}
 \end{equation} 
and that, via this isomorphism,
\begin{equation}\label{eq:Higgs field splitted}
\beta\in H^0\bigg(\bigoplus_{\lambda_i=\mu_j}\Hom(W_j/W_{j-1},V_i/V_{i-1})\otimes K\bigg).
\end{equation}

Now we analyse the possible cases. Condition \eqref{eq:symplectic form} tells us that, with respect to decomposition \eqref{eq:splitting}, we have
$$ \omega^V=\begin{pmatrix}
    0 & 0 & 0 & \dots & -(\omega^V_1)^t \\
    \vdots &  \dots & 0 & \dots & \vdots \\
    0 & 0 &  \dots &  \dots & 0\\
    0 & \omega^V_2 & 0 & \dots & 0 \\
    \omega^V_1 & 0 & 0 & \dots & 0\
  \end{pmatrix},\hspace{.5cm}
  \omega^W=\begin{pmatrix}
    0 & 0 & 0 & \dots & -(\omega^W_1)^t \\
    \vdots &  \dots & 0 & \dots & \vdots \\
    0 & 0 &  \dots &  \dots & 0\\
    0 & \omega^W_2 & 0 & \dots & 0 \\
    \omega^W_1 & 0 & 0 & \dots & 0\
  \end{pmatrix}$$
where $\omega^V_i:V_i/V_{i-1}\stackrel{\cong}{\longrightarrow}(V_{k+1-i}/V_{k-i})^*$ is the isomorphism induced by $\Omega_V$ and similarly for $\omega^W_i$. 

For any $k,l$, whenever we have $\lambda_i=\mu_j$ for some $i\neq \frac{k+1}{2}$ and $j\neq\frac{l+1}{2}$,  we also have $\lambda_{k+1-i}=\mu_{l+1-j}$. The symplectic forms do not restrict to $V_i/V_{i-1}$ and to $W_j/W_{j-1}$, but we have the restrictions $$\beta_j:W_j/W_{j-1}\longrightarrow V_i/V_{i-1}\otimes K,\hspace{.3cm}\beta_{l+1-j}:W_{l+1-j}/W_{l-j}\longrightarrow V_{k+1-i}/V_{k-i}\otimes K$$ of $\beta$ to  $W_j/W_{j-1}$ and $W_{l+1-j}/W_{l-j}$ respectively. If $$\gamma_i:V_i/V_{i-1}\longrightarrow W_j/W_{j-1}\otimes K$$ is given by $\gamma_i=-((\omega_j^W)^{-1}\otimes 1_K)(\beta_{l+1-j}\otimes 1_K)\omega_i^V$ and if $p'=\rk(V_i/V_{i-1})$ and $q'= \rk(W_j/W_{j-1})$, then $$(V_i/V_{i-1},W_j/W_{j-1},\beta_j,\gamma_i)$$ is a $\U(p',q')$-Higgs bundle. Of course one also obtains the corresponding dual $\U(p',q')$-Higgs bundle, being a $\U(p'+q')$-Higgs bundle if $\beta_j=\gamma_i=0$.

If $\lambda_{\frac{k+1}{2}}=\mu_{\frac{l+1}{2}}$, then we must have $k,l$ both odd, and indeed $\lambda_{\frac{k+1}{2}}=\mu_{\frac{l+1}{2}}=0$ so, from \eqref{eq:Higgs field splitted} $\beta$ restricts to $\beta_{\frac{l+1}{2}}:W_{\frac{l+1}{2}}/W_{\frac{l-1}{2}}\to V_{\frac{k+1}{2}}/V_{\frac{k-1}{2}}$. Also, from \eqref{eq:symplectic form}, $\Omega_V$ and $\Omega_W$ both restrict to $V_{\frac{k+1}{2}}/V_{\frac{k-1}{2}}$ and to $W_{\frac{l+1}{2}}/W_{\frac{l-1}{2}}$ which must then have even rank $2p'$ and $2q'$ respectively ($p'\leq p$ and $q'\leq q$). Hence, if $$\gamma_{\frac{l+1}{2}}=-((\omega^W)^{-1}_{\frac{l+1}{2}}\otimes 1_K)( \beta^t_{\frac{l+1}{2}}\otimes 1_K)\omega^V_{\frac{k+1}{2}}$$ then $$(V_{\frac{k+1}{2}}/V_{\frac{k-1}{2}},\Omega_V,W_{\frac{l+1}{2}}/W_{\frac{l-1}{2}},\Omega_W,\beta_{\frac{l+1}{2}},\gamma_{\frac{l+1}{2}})$$ is an $\Sp(2p',2q')$-Higgs bundle, being an $\Sp(2p'+2q')$-Higgs bundle if 
$\beta_{\frac{l+1}{2}}=0$.

Similarly, if $\lambda_i\neq\mu_j$, then $\beta$ does not restrict to $\beta_j:W_j/W_{j-1}\to V_i/V_{i-1}\otimes K$, and if $i\neq \frac{k+1}{2}$ and $j\neq\frac{l+1}{2}$ then the symplectic forms $\Omega_V$ and $\Omega_W$ also do not restrict to $V_j/V_{j-1}$ and $W_j/W_{j-1}$ respectively. So $(V_j/V_{j-1},0)$ is a $\U(p')$-Higgs bundle ($0$ meaning the zero Higgs field) and $(W_j/W_{j-1},0)$ is a $\U(q')$-Higgs bundle.
If still $\lambda_i\neq\mu_j$ but $i=\frac{k+1}{2}$ (hence $k$ is odd) then the only difference is that $\Omega_V$ restricts to $V_j/V_{j-1}$ so its rank $p'$ must be even. Hence $(V_j/V_{j-1},\Omega_V,0)$ is an $\Sp(p')$-Higgs bundle. Similarly, one obtains an $\Sp(q')$-Higgs bundle if $\lambda_i\neq\mu_j$ but $j=\frac{l+1}{2}$ (hence $l$ is odd).
\endproof

For convenience of the reader, we give an example of a strictly polystable $\Sp(2p,2q)$-Higgs bundle. Let $(V,\Omega_V,W,\Omega_W,\beta,\gamma)$ be a stable $\Sp(2p',2q')$-Higgs bundle. Then clearly $$(V,\Omega_V,W,\Omega_W,\beta,\gamma)\oplus (V,\Omega_V,W,\Omega_W,\beta,\gamma)=(V\oplus V,\Omega_V\oplus\Omega_V,W\oplus W,\Omega_W\oplus\Omega_W,\beta\oplus\beta,\gamma\oplus\gamma)$$ is a polystable $\Sp(4p',4q')$-Higgs bundle. Now, the inclusions $V\hookrightarrow V\oplus V$ and $W\hookrightarrow W\oplus W$ given respectively by $v\mapsto(v,\sqrt{-1}v)$ and $w\mapsto(w,\sqrt{-1}w)$, yield a pair of isotropic subbundles of $(V\oplus V,\Omega_V\oplus\Omega_V)$ and of $(W\oplus W,\Omega_W\oplus\Omega_W)$ such that $(\beta\oplus\beta)(W\oplus W)\subset(V\oplus V)\otimes K$. Moreover, $\deg(V)+\deg(W)=0$. So $(V\oplus V,\Omega_V\oplus\Omega_V,W\oplus W,\Omega_W\oplus\Omega_W,\beta\oplus\beta,\gamma\oplus\gamma)$ is not stable.
This is the reason why we impose non-isomorphic summands in Corollary \ref{cor:direct sum of stable}.

\section{The Hitchin proper functional and the minimal subvarieties}\label{morse quadruples}

Here we use the method introduced by Hitchin in \cite{hitchin:1987} to
study the topology of the moduli spaces of $G$-Higgs bundles, applying it to $\M_{\Sp(2p,2q)}$.

Let $$f:\M_{\Sp(2p,2q)}\longrightarrow\RR$$ be given by
\begin{equation}\label{eq:Hitfunc}
f(V,\Omega_V,W,\Omega_W,\beta,\gamma)=\|\beta\|_{L^2}^2+\|\gamma\|_{L^2}^2=\int_{X}|\beta|^2+|\gamma|^2\mathrm{dvol}.
\end{equation}
This function $f$ is known as the \emph{Hitchin function}.

\begin{remark}
The definition of the Hitchin function $f$ implicitly uses the correspondence between $\M_{\Sp(2p,2q)}$ and the moduli space of solutions of the so-called Hitchin equations. In fact the Hitchin-Kobayashi correspondence states that an $\Sp(2p,2q)$-Higgs bundle $(V,\Omega_V,W,\Omega_W,\beta,\gamma)$ is polystable if and only if it admits a reduction to the maximal compact $\Sp(2p)\times\Sp(2q)$ i.e. a metric $h$ on $(V,\Omega_V,W,\Omega_W)$ such that the Hitchin equations are verified. This is the so-called harmonic metric. For more details, see \cite{corlette:1988,donaldson:1987,hitchin:1987,simpson:1992} and, more recently and in this generality, \cite{bradlow-garcia-prada-mundet:2003,garcia-gothen-mundet:2008}.
Thus, here we are using the harmonic metric on $(V,\Omega_V,W,\Omega_W)$ to define
$\|\beta\|_{L^2}$ and $\|\gamma\|_{L^2}$. 
\end{remark}

Using the Uhlenbeck weak compactness theorem, one can prove \cite{hitchin:1987, hitchin:1992} that the function $f$ is proper and therefore it attains a minimum on each closed subspace $\M'$ of $\M_{\Sp(2p,2q)}$. Moreover, from general topology, one knows that if the subspace of local minima of $f$ on $\M'$ is connected then so is $\M'$.

\begin{proposition}\label{proper}
Let $\M'\subseteq\M_{\Sp(2p,2q)}$ be a closed subspace and let $\N'\subset\M'$ be the subspace of local minima of $f$ on $\M'$. If $\N'$ is connected then so is $\M'$.
\end{proposition}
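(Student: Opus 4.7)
The plan is to use properness of $f$ together with the non-negativity of $f$ to show that every connected component of $\M'$ must intersect $\N'$, and then invoke connectedness of $\N'$ to conclude.

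First I would restrict the Hitchin function to $\M'$. Since $\M'$ is closed in $\M_{\Sp(2p,2q)}$ and $f$ is proper on the ambient moduli space, the restriction $f|_{\M'}$ is again proper. Moreover, from the definition \eqref{eq:Hitfunc}, $f\geq 0$, so $f|_{\M'}$ is bounded below.

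Next I would let $C\subseteq\M'$ be any connected component. Since $\M'$ is locally a complex algebraic variety, its connected components are closed in $\M'$, so $f|_{C}$ is proper and bounded below. Therefore $f$ attains its infimum on $C$ at some point $x_C\in C$. This $x_C$ is certainly a local minimum of $f$ on $\M'$, so $x_C\in\N'$. Thus every connected component of $\M'$ meets $\N'$.

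Finally, I would argue: since $\N'$ is connected and is contained in $\M'$, it is entirely contained in a single connected component of $\M'$. But every connected component of $\M'$ meets $\N'$, so there can be only one such component; that is, $\M'$ is connected.

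There is no real obstacle here: the argument is a standard Morse-theoretic consequence of properness. The only point that requires a little care is the distinction between local and global minima, but this is harmless because a point where $f|_C$ attains its global minimum on a component $C$ is automatically a local minimum on all of $\M'$, since $C$ is open in $\M'$ (components of a locally connected space being open).
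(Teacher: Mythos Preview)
Your argument is correct and is precisely the standard ``properness plus general topology'' reasoning the paper invokes; the paper does not spell out a proof beyond noting that $f$ is proper (via Uhlenbeck compactness) and that the conclusion then follows from general topology. Your write-up simply makes explicit the argument the paper leaves implicit.
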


The idea is to have a detailed description of the subspace of local minima of $f$, enough to draw conclusions about its connectedness. The way this is achieved was first found by Hitchin in \cite{hitchin:1987} and \cite{hitchin:1992}, and then was applied for several cases \cite{bradlow-garcia-prada-gothen:2003,bradlow-garcia-prada-gothen:2005,bradlow-garcia-prada-gothen:2008,garcia-gothen-mundet:2008 II,garcia-oliveira:2012}. This is hence by now a standard method, so we will only sketch it.

The analysis of the local minima of $f$ is done separately for smooth and non-smooth points.
From Proposition \ref{prop:stable and simple imply smooth} we know that a stable and simple $\Sp(2p,2q)$-Higgs bundle represents a smooth point on $\M_{\Sp(2p,2q)}$. So, we will carry the analysis of local minima of $f$ separately for stable and simple, hence smooth, $\Sp(2p,2q)$-Higgs bundles and for those $\Sp(2p,2q)$-Higgs bundle which may not be stable or simple.

\subsection{Stable and simple local minima}

 In this section we only consider stable and simple $\Sp(2p,2q)$-Higgs bundles.
The restriction of the Hitchin function $f$ to points represented by such Higgs bundles is a moment map for the Hamiltonian
$S^1$-action on $\M_{\Sp(2p,2q)}$ given by
$$e^{\sqrt{-1}\theta}\cdot(V,\Omega_V,W,\Omega_W,\beta,\gamma)=(V,\Omega_V,W,\Omega_W,e^{\sqrt{-1}\theta}\beta,e^{\sqrt{-1}\theta}\gamma).$$
A point of $\M_{\Sp(2p,2q)}$ represented by a stable and simple $\Sp(2p,2q)$-Higgs bundle is thus a
critical point of $f$ if and only if it is a fixed point of the
$S^1$-action. Let us then study the fixed point set of the given
action.

Let $(V,\Omega_V,W,\Omega_W,\beta,\gamma)$ represent a stable and simple (hence smooth) fixed point. Then either $\beta=0$ (hence $\gamma=0$)
or, since the action is on $\M_{\Sp(2p,2q)}$, there is a
one-parameter family of gauge transformations
$g(t)$ such that
$$g(t)\cdot(V,\Omega_V,W,\Omega_W,\beta,\gamma)=(V,\Omega_V,W,\Omega_W,e^{\sqrt{-1}t}\beta,e^{\sqrt{-1}t}\gamma).$$

In the latter case, let
$$\psi:=\frac{d}{d t}g(t)|_{t=0}$$
be the infinitesimal gauge
transformation generating this family. Let $V_\lambda$'s and $W_\mu$'s be the eigenbundles of $\psi=(\psi_V,\psi_W)$, where $\psi_V$ and $\psi_W$ are the induced infinitesimal gauge transformations of $V$ and $W$ and $\lambda,\mu\in\RR$. Over $V_\lambda$ and over $W_\mu$,
\begin{equation}\label{psi over V and W}
\psi_V|_{V_\lambda}=\sqrt{-1}\lambda\in\CC,\hspace{1cm}\psi_W|_{W_\mu}=\sqrt{-1}\mu\in\CC.
\end{equation}
Then, from \cite{hitchin:1987,hitchin:1992,simpson:1992}, it follows that $(V,\Omega_V,W,\Omega_W,\beta,\gamma)$ is what is called a \emph{complex variation of
Hodge structure} or a \emph{Hodge bundle}. This means that
\begin{equation}\label{eq:Hodge bundles}
V\cong\bigoplus_\lambda V_\lambda,\hspace{1cm}W\cong\bigoplus_\mu W_\mu.
\end{equation}
Moreover,
\begin{equation}\label{eq:betagamma in Hodge}
\ad(\psi)(\beta,\gamma)=\sqrt{-1}(\beta,\gamma).
\end{equation}
Hence, for each $\lambda,\mu$, $$\beta(W_\mu)\subset V_{\mu+1}\otimes K,\hspace{1cm}\gamma(V_\lambda)\subset W_{\lambda+1}\otimes K$$ so 
\begin{equation}\label{eq:Hodge decomposition of varphi}
\begin{split}
\beta&=\sum\beta_\mu,\hspace{1cm}\beta_\mu:=\beta|_{W_\mu}:W_\mu\longrightarrow V_{\mu+1}\otimes K,\\
\gamma&=\sum\gamma_\lambda,\hspace{1cm}\gamma_\lambda:=\gamma|_{V_\lambda}:V_\lambda\longrightarrow W_{\lambda+1}\otimes K.
\end{split}
\end{equation}
This tells us also that the eigenvalues of $\psi_V$ differ by one, as well as the ones of $\psi_W$, and that there is a tight connection between the eigenvalues of $\psi_V$ and of $\psi_W$.

Since $\psi=(\psi_V,\psi_W)$ locally takes values in $\mathfrak{sp}(2p)\oplus\mathfrak{sp}(2q)$, then using \eqref{psi over V and W} one can prove that
 $V_\lambda$ and $V_{\lambda'}$ are orthogonal under $\Omega_V$ unless $\lambda+\lambda'=0$ and that $W_\mu$ and $W_{\mu'}$ are also orthogonal under $\Omega_W$ unless $\mu+\mu'=0$. Therefore $\omega^V:V\to V^*$ and $\omega^W:W\to W^*$ yield isomorphisms
$$\omega^V_\lambda:=\omega^V|_{V_\lambda}:V_\lambda\stackrel{\cong}{\longrightarrow}V_{-\lambda}^*,\hspace{1cm}\omega^W_\mu:=\omega^W|_{W_\mu}:W_\mu\stackrel{\cong}{\longrightarrow}W_{-\mu}^*$$
satisfying
\begin{equation}\label{eq:skewsymmsymplVW}
(\omega^V_\lambda)^t=-\omega^V_{-\lambda},\hspace{1cm} (\omega^W_\mu)^t=-\omega^W_{-\mu}.
\end{equation}
In particular, this means that the decompositions \eqref{eq:Hodge bundles} are such that
\begin{equation}\label{eq:decomposition of V and  W} 
\begin{split}
V&=\bigoplus_{\lambda=-m}^m V_\lambda=V_{-m}\oplus V_{-m+1}\oplus\dots\oplus V_{m-1}\oplus V_m,\\
W&=\bigoplus_{\mu=-n}^nW_\mu=W_{-n}\oplus W_{-n+1}\oplus\dots\oplus W_{n-1}\oplus W_n
\end{split}
\end{equation}
for some $m,n\geq 1/2$ integers or half-integers. Notice that we also have $(\omega^W_{\lambda+1}\otimes 1_K)\gamma_\lambda=(-\beta_{-\lambda-1}^t\otimes 1_K)\omega^V_\lambda$,  for every $\lambda$.

\begin{remark}\label{rem:type of indices}
Notice that, if $\beta\neq 0$, \eqref{eq:Hodge decomposition of varphi} implies that both $m$ and $n$ must be of the same type, i.e., either both integers (if the number of summands in both sums of \eqref{eq:decomposition of V and  W} is odd) or both half-integers (if the number of summands in both sums of \eqref{eq:decomposition of V and  W} is even).
\end{remark}

Now, the Cartan decomposition of $\liegc$ induces a decomposition of vector bundles
$$E(\liegc)=E(\liehc)\oplus E(\liemc)$$
where $E(\liegc)$ (resp. $E(\liehc)$) is the adjoint bundle associated to the adjoint representation of $H^\CC$ on $\liegc$ (resp. $\liehc$).
For the group $\Sp(2p,2q)$, we have
$$E(\liegc)=\Lambda^2_{\Omega_V\oplus\Omega_W}(V\oplus W)$$ where $\Omega_V\oplus\Omega_W$ is the symplectic form on $V\oplus W$ canonically defined by $\Omega_V$ and $\Omega_W$. 
Also, we already know that
$$E(\liehc)=\Lambda_{\Omega_V}^2V\oplus\Lambda_{\Omega_W}^2W$$ and, from \eqref{eq:condbetagamma}, that
\begin{equation}\label{eq:E(mc)}
E(\liemc)=\{(f,g)\in\Hom(W,V)\oplus\Hom(V,W)\st\omega^Wg=-f^t\omega^V\}\cong\Hom(W,V)
\end{equation}
Let
\begin{equation}\label{eq:Cartanvectorbundle}
\theta:\Lambda^2_{\Omega_V\oplus\Omega_W}(V\oplus W)\longrightarrow\Lambda^2_{\Omega_V\oplus\Omega_W}(V\oplus W)
\end{equation} be the involution in $E(\liegc)=\Lambda^2_{\Omega_V\oplus\Omega_W}(V\oplus W)$ defining the above decomposition. It is, of course, induced by the Cartan involution of the Lie algebra, $\theta$ defined in \eqref{eq:Cartan involution}.
Its $+1$-eigenbundle is $\Lambda_{\Omega_V}^2V\oplus\Lambda_{\Omega_W}^2W$ and its
$-1$-eigenbundle is $E(\liemc)$ as in \eqref{eq:E(mc)}.

Decompositions \eqref{eq:decomposition of V and  W} of $V$ and $W$ yield also a decomposition of $$\End(V\oplus W)=\End(V)\oplus\End(W)\oplus\Hom(W,V)\oplus\Hom(V,W)$$ as follows:
$$\End(V)=\bigoplus_{k=-2m}^{2m}\bigoplus_{\lambda'-\lambda=k}\Hom(V_\lambda,V_{\lambda'}),
\hspace{.5cm}
\End(W)=\bigoplus_{k=-2n}^{2n}\bigoplus_{\mu'-\mu=k}\Hom(W_\mu,W_{\mu'}),
$$
and
$$\Hom(W,V)=\bigoplus_{k=-m-n}^{m+n}\bigoplus_{\lambda-\mu=k}\Hom(W_\mu,V_\lambda),
\hspace{.5cm} \Hom(V,W)=\bigoplus_{k=-m-n}^{m+n}\bigoplus_{\mu-\lambda=k}\Hom(V_\lambda,W_\mu).
$$
From Remark \ref{rem:type of indices}, the index $k$ in all these sums goes over integers and not half-integers.

Write $k_M:=2\max\{m,n\}$. For each $k\in\{-k_M,\ldots,k_M\}$, let $E(\liegc)_k$ be the subbundle of $E(\liegc)$ defined as
$$E(\liegc)_k:=\Big(\bigoplus_{\mu-\lambda=k}\End(V_\lambda\oplus W_\lambda,V_\mu\oplus W_\mu)\Big)\cap\Lambda^2_{\Omega_V\oplus\Omega_W}(V\oplus W).$$
We have therefore, that 
\begin{equation}\label{eq:weight composition liegc}
\Lambda^2_{\Omega_V\oplus\Omega_W}(V\oplus W)=E(\liegc)=\bigoplus_{k=-k_M}^{k_M}E(\liegc)_k.
\end{equation}
From \eqref{psi over V and W}, $E(\liegc)_k$ is the $\sqrt{-1}k$-eigenbundle for the adjoint action
$$\ad(\psi):\Lambda^2_{\Omega_V\oplus\Omega_W}(V\oplus W)\to\Lambda^2_{\Omega_V\oplus\Omega_W}(V\oplus W)$$
of $\psi$. We say that $E(\liegc)_k$ is the subspace of
$\Lambda^2_{\Omega_V\oplus\Omega_W}(V\oplus W)$ with \emph{weight} $k$.

Now, given an element $v$ in $E(\liegc)_k$, we have $\ad(\psi)(v)=\sqrt{-1}kv$. On the other hand, if $\theta$ is given by \eqref{eq:Cartanvectorbundle}, $$\ad(\psi)(\theta v)=[\psi,\theta v]=\theta[\theta\psi,v]=\theta[\psi,v]=\sqrt{-1}k\theta v$$ because $\theta$ in \eqref{eq:Cartanvectorbundle} is induced by the Lie algebra Cartan involution \eqref{eq:Cartan involution} and because $\psi$ takes values in $\liehc$.
So, we conclude that $\theta$ in \eqref{eq:Cartanvectorbundle} restricts to an involution, $\theta_k:E(\liegc)_k\longrightarrow E(\liegc)_k$, on $E(\liegc)_k$.
Hence we can write $$E(\liegc)_k=E(\liehc)_k\oplus E(\liemc)_k$$ where $E(\liehc)_k,E(\liemc)_k$ are the $\pm 1$ eigenbundles of $\theta_k$. $E(\liehc)_k$ is given by
\begin{equation*}
\begin{split}
E(\liehc)_k&:=E(\liegc)_k\cap E(\liehc)\\
&=\bigoplus_{\mu-\lambda=k}\bigg(\big(\Hom(V_\lambda,V_\mu)\cap\Lambda_{\Omega_V}^2V\big)\oplus\big(\Hom(W_\lambda,W_\mu)\cap\Lambda_{\Omega_W}^2W\big)\bigg)
\end{split}
\end{equation*}
thus, 
\begin{equation}\label{eq:weight composition liehc}
\Lambda_{\Omega_V}^2V\oplus\Lambda_{\Omega_W}^2W=E(\liehc)=\bigoplus_{k=-k_M}^{k_M}
E(\liehc)_k.
\end{equation} 
On the other hand,
\begin{equation*}
\begin{split}
E(\liemc)_k&:=E(\liegc)_k\cap E(\liemc)\\
&=\Big(\bigoplus_{\mu-\lambda=k}\Hom(W_\lambda,V_\mu)\oplus\Hom(V_\lambda,W_\mu)\Big)\cap\\
&\quad\cap\{(f,g)\in\Hom(W,V)\oplus\Hom(V,W)\st \omega^Wg=-f^t\omega^V\},
\end{split}
\end{equation*} thus 
\begin{equation}\label{eq:weight composition liemc}
E(\liemc)=\bigoplus_{k=-k_M}^{k_M} E(\liemc)_k.
\end{equation}
Observe that, by \eqref{eq:betagamma in Hodge}, $(\beta,\gamma)\in H^0(E(\liemc)_1\otimes K)$.

Notice that if $(f,g)\in E(\liegc)_k$ is such that $$f:=\sum_{\mu-\lambda=k} f_{\lambda,\mu}\hspace{.3cm}\text{and}\hspace{.3cm}g:=\sum_{\mu-\lambda=k} g_{\lambda,\mu},$$ with $f_{\lambda,\mu}\in\Hom(V_\lambda,V_\mu)$ and $g_{\lambda,\mu}\in\Hom(W_\lambda,W_\mu)$, then it follows from the definition of $E(\liehc)_k$ and from \eqref{eq:skewsymmsymplVW} that $(f,g)\in E(\liehc)_k$ if and only if
\begin{equation}\label{eq:condiE(h)}
\omega^V_{-\lambda}f_{-\mu,-\lambda}=-f_{\lambda,\mu}^t\omega^V_{-\mu},
\hspace{1cm}
\omega^W_{-\lambda}g_{-\mu,-\lambda}=-g_{\lambda,\mu}^t\omega^W_{-\mu},
\end{equation}
for every $\mu,\lambda$ with $\mu-\lambda=k$.

The map $\ad(\beta,\gamma)$ interchanges $E(\liehc)$ with $E(\liemc)$ and
therefore maps $E(\liehc)_k$ to $E(\liemc)_{k+1}\otimes K$ and $E(\liemc)_k$ to $E(\liehc)_{k+1}\otimes K$. So, for each
$k$, we have a weight $k$ subcomplex of the complex $C^\bullet=C^\bullet(V,\Omega_V,W,\Omega_W,\beta,\gamma)$ defined in \eqref{eq:defcomplex}:
\begin{equation}\label{eq:subcomplex}
C^\bullet_k=C^\bullet_k(V,\Omega_V,W,\Omega_W,\beta,\gamma):E(\liehc)_k\xrightarrow{\ad(\beta,\gamma)_k}E(\liemc)_{k+1}\otimes K.
\end{equation}

The following result is fundamental for the description of the smooth local minima of $f$. This is basically  \cite[Lemma 3.11]{bradlow-garcia-prada-gothen:2008} (see also \cite[Proposition 4.4]{bradlow-garcia-prada-gothen:2003}). Although the proof in those papers is for $\GL(n,\CC)$ and $\U(p,q)$-Higgs bundles, the same argument works in the general setting of $G$-Higgs bundles (see \cite[Remark 4.16]{bradlow-garcia-prada-gothen:2003}): the key facts are that for a stable $G$-Higgs bundle, $(E_{H^\CC},\varphi)$, the Higgs vector bundle $(E_{H^\CC}\times_{\Ad}\liegc,\ad(\varphi))$ is semistable, and that there is a natural $\ad$-invariant isomorphism $E_{H^\CC}\times_{\Ad}\liegc\cong(E_{H^\CC}\times_{\Ad}\liegc)^*$ given by an invariant pairing on $\liegc$, such as the Killing form. 

\begin{theorem}\label{ad}
Let $(V,\Omega_V,W,\Omega_W,\beta,\gamma)\in\M_{\Sp(2p,2q)}$ be a stable and simple critical point of the Hitchin function $f$. Then
$(V,\Omega_V,W,\Omega_W,\beta,\gamma)$ is a local minimum if and only if either $\beta=\gamma=0$ or $\ad(\beta,\gamma)_k$ in \eqref{eq:subcomplex} is an isomorphism for all $k\geq 1$.
\end{theorem}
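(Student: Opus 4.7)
The trivial direction is immediate: if $\beta=\gamma=0$ then $f$ attains its absolute minimum $0$ at $(V,\Omega_V,W,\Omega_W,0,0)$, so that point is a local minimum. For the converse, assume $(\beta,\gamma)\neq 0$; the plan is to apply the standard Morse-theoretic package originating with \cite{hitchin:1987} and systematised for the stable and simple locus in \cite{bradlow-garcia-prada-gothen:2003,bradlow-garcia-prada-gothen:2008}.

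By Proposition~\ref{prop:stable and simple imply smooth} the critical point is smooth, and $f$ is a moment map for the Hamiltonian $S^1$-action introduced above. Frankel's theorem then identifies the Hessian of $f$ at the fixed point with the infinitesimal generator of the induced $S^1$-action on the tangent space $\HH^1(C^\bullet)$; the weight decompositions \eqref{eq:weight composition liehc} and \eqref{eq:weight composition liemc} yield
\begin{equation*}
\HH^1(C^\bullet)=\bigoplus_k \HH^1(C^\bullet_k),
\end{equation*}
and the Hessian acts on $\HH^1(C^\bullet_k)$ by $k$ up to a fixed positive multiplicative constant. Consequently $(V,\Omega_V,W,\Omega_W,\beta,\gamma)$ is a local minimum of $f$ if and only if $\HH^1(C^\bullet_k)=0$ for every $k\leq -1$.

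The remaining step is to translate this cohomological vanishing into the bundle-map isomorphism condition stated in the theorem. The argument of \cite[Proposition~4.4]{bradlow-garcia-prada-gothen:2003} and \cite[Lemma~3.11]{bradlow-garcia-prada-gothen:2008}, as the excerpt already announces, applies whenever two ingredients are in place: (a) the semistability of the adjoint Higgs bundle $(E(\liegc),\ad(\varphi))$, which follows from stability of $(V,\Omega_V,W,\Omega_W,\beta,\gamma)$ as a $G$-Higgs bundle; and (b) a non-degenerate $\Ad(H^\CC)$-invariant pairing on $\liegc$, for which one takes the Killing form of $\mathfrak{sp}(2p+2q,\CC)$. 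Because this Killing form is invariant under the Cartan involution \eqref{eq:Cartan involution} and under $\ad(\psi)$, it pairs $E(\liehc)_k$ non-degenerately with $E(\liehc)_{-k}$ and $E(\liemc)_k$ with $E(\liemc)_{-k}$; combined with Serre duality on $X$ this places $\HH^1(C^\bullet_k)$ in duality with a hypercohomology group assembled from $\ad(\beta,\gamma)_{-k-1}$. Together with the vanishing of $\HH^0(C^\bullet_k)$ and $\HH^2(C^\bullet_k)$ coming from stability and simplicity, a diagram chase in the long exact sequence of the weight-$k$ subcomplex then delivers the claimed equivalence.

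The main obstacle is precisely this last piece of bookkeeping: verifying that the Killing-form pairing, once decomposed along the Cartan involution \eqref{eq:Cartan involution} and the $S^1$-weight grading produced by $\psi$, intertwines $C^\bullet_k$ with the Serre dual of $C^\bullet_{-k-1}$ in the sharp form needed to convert weight-negative vanishing into the weight-positive isomorphism property of $\ad(\beta,\gamma)_k$. This reduces to compatibility with the symplectic identities \eqref{eq:skewsymmsymplVW} and \eqref{eq:condiE(h)}; once that is in hand, the argument of \cite{bradlow-garcia-prada-gothen:2003,bradlow-garcia-prada-gothen:2008} transfers without change, completing the proof.
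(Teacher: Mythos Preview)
Your proposal is correct and matches the paper's approach exactly: the paper does not supply an independent proof of this theorem but simply invokes \cite[Lemma~3.11]{bradlow-garcia-prada-gothen:2008} and \cite[Proposition~4.4]{bradlow-garcia-prada-gothen:2003}, noting the same two ingredients you isolate---semistability of the adjoint Higgs bundle $(E(\liegc),\ad(\varphi))$ and the Killing-form pairing on $\liegc$. Your sketch of the Frankel/Hessian mechanism and the Serre-duality bookkeeping is a faithful expansion of what those references contain, so there is nothing substantively different to compare. (One cosmetic remark: the standard convention in the cited papers has the Hessian acting on $\HH^1(C^\bullet_k)$ with eigenvalue $-k$ rather than $k$, so the vanishing condition is for $k\geq 1$ directly; your duality step then becomes the $\chi$-argument rather than a weight flip, but the content is identical.)
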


Using this, one can now describe the smooth local minima of the Hitchin function $f$.

\begin{proposition}\label{min}
Let the $\Sp(2p,2q)$-Higgs bundle $(V,\Omega_V,W,\Omega_W,\beta,\gamma)$ be a critical point of the Hitchin function
$f$ such that $(V,\Omega_V,W,\Omega_W,\beta,\gamma)$ is stable and simple (hence smooth). Then $(V,\Omega_V,W,\Omega_W,\beta,\gamma)$ represents a local minimum if and only if $\beta=\gamma=0$.
\end{proposition}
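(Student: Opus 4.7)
The plan is to use Theorem~\ref{ad} as the essential input, together with a rank count at the extremal weight of the Hodge bundle structure. The ``if'' direction is immediate: when $\beta=\gamma=0$ the Hitchin function $f$ vanishes at this point, which is its absolute minimum. For the converse I would argue by contradiction: suppose $(V,\Omega_V,W,\Omega_W,\beta,\gamma)$ is a stable, simple critical point of $f$ with $(\beta,\gamma)\neq 0$ that represents a local minimum. By Theorem~\ref{ad}, the bundle map $\ad(\beta,\gamma)_k\colon E(\liehc)_k\to E(\liemc)_{k+1}\otimes K$ from the subcomplex \eqref{eq:subcomplex} must be an isomorphism for every $k\geq 1$, and my goal is to contradict this at the top weight.

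Since the Higgs field is nonzero, the Hodge bundle decompositions \eqref{eq:decomposition of V and  W} are nontrivial, and by Remark~\ref{rem:type of indices} the extremal indices $m$ and $n$ are of the same type. Without loss of generality assume $m\geq n$, so that $k_M=2m\geq 1$. I would focus on the weight $k=k_M$. From \eqref{eq:weight composition liemc} and the fact that $E(\liemc)$ sits inside $\Hom(W,V)\oplus\Hom(V,W)$, the nonzero weights of $E(\liemc)$ range only up to $m+n$. Since $k_M+1=2m+1>m+n$ under the assumption $m\geq n$, this yields $E(\liemc)_{k_M+1}=0$, so the isomorphism requirement of Theorem~\ref{ad} forces $E(\liehc)_{k_M}=0$. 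The plan is then to exhibit a nonzero section of $E(\liehc)_{k_M}$, contradicting this vanishing.

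To produce the contradiction, I would read off $E(\liehc)_{k_M}$ directly from the weight decomposition \eqref{eq:weight composition liehc}: only the pair $(\lambda,\mu)=(-m,m)$ (and, in the boundary case $m=n$, also the pair from the $W$-side) can contribute to weight $k_M$. Applying the symplectic constraint \eqref{eq:condiE(h)} with $\lambda=-m,\mu=m$, and using $(\omega^V_{-m})^t=-\omega^V_m$, one checks that the assignment $f_{-m,m}\mapsto \omega^V_m\,f_{-m,m}$ identifies the $V$-contribution to $E(\liehc)_{k_M}$ with the bundle $S^2V_{-m}^{*}$ of symmetric forms. Its rank equals $v_m(v_m+1)/2$, which is strictly positive since $v_m=\rk V_m\geq 1$ by the extremality of $m$. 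This nonvanishing contradicts $E(\liehc)_{k_M}=0$ and completes the argument.

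The main obstacle I expect is exactly this rank computation: one must verify carefully that the symplectic reality condition \eqref{eq:condiE(h)} singles out the \emph{symmetric} piece $S^2V_{-m}^{*}$ (not $\Lambda^2V_{-m}^{*}$, which could be zero in small ranks), so that the contribution is automatically nonzero. Once this is in hand, no further case analysis is needed: the cases $m>n$ and $m=n$ are handled uniformly, and the distinction between integer and half-integer weights is irrelevant because $k_M=2m$ is a positive integer either way.
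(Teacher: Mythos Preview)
Your proposal is correct and follows essentially the same line as the paper: both arguments take $k=k_M=2m$ (assuming $m\ge n$), observe that $E(\liemc)_{2m+1}=0$, and then contradict Theorem~\ref{ad} by showing $E(\liehc)_{2m}\neq 0$ via its identification with the symmetric piece $S^2V_{-m}^*$ through the isomorphism $\omega^V_m$ and condition~\eqref{eq:condiE(h)}. One small wording point: what you actually need (and compute) is that the \emph{bundle} $E(\liehc)_{k_M}$ has positive rank, not that it admits a nonzero global section; your rank count $v_m(v_m+1)/2>0$ is exactly the right statement.
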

\proof From the definition of $f$ in \eqref{eq:Hitfunc}, if $\beta=\gamma=0$, then clearly $(V,\Omega_V,W,\Omega_W,\beta,\gamma)$ represents a local minimum.

Let us prove the converse. Recall that $\beta$ and $\gamma$ are related by
\eqref{eq:condbetagamma}. In particular, $\beta=0$ if and only if
$\gamma=0$. Suppose that $(V,\Omega_V,W,\Omega_W,\beta,\gamma)$ is a critical
point of $f$ with $\beta\neq 0$. Hence, as explained above, we have the
decompositions 
\eqref{eq:decomposition of V and  W} 
of $V$ and $W$ and also the weight decompositions \eqref{eq:weight composition liegc}, \eqref{eq:weight composition liehc} and \eqref{eq:weight composition liemc} of $E(\liegc)$, $E(\liehc)$ and of $E(\liemc)$ respectively.

Recall that $k_M=2\max\{m,n\}$ is the highest weight. Suppose that $m\geq n$, so that $k_M=2m\geq 1$. 
Take the complex $C^\bullet_{2m}$ for this highest weight. Then from Theorem \ref{ad}, we must have an isomorphism
$$\ad(\beta,\gamma)_{2m}:E(\liehc)_{2m}\longrightarrow E(\liemc)_{2m+1}\otimes K.$$
Since we are taking the highest weight, $E(\liemc)_{2m+1}\otimes K=0$. However $E(\liehc)_{2m}\neq 0$.
Indeed, $$E(\liehc)_{2m}=\Hom(V_{-m},V_m)\cap\Lambda^2_{\Omega_V}V.$$ Since $V_{-m}\neq 0$, then it is possible to find a non-zero map $g:V_{-m}\to V_{-m}^*$ which is symmetric: $g^t=g$. Now, consider $$(\omega_m^V)^{-1}g:V_{-m}\longrightarrow V_m.$$ It follows from the symmetry of $g$, from \eqref{eq:skewsymmsymplVW} and from \eqref{eq:condiE(h)} that $(\omega_m^V)^{-1}g$ is indeed a non-zero element of  $E(\liehc)_{2m}$.

So, $\ad(\beta,\gamma)_{2m}$ is not an isomorphism and by the previous theorem, $(V,\Omega_V,W,\Omega_W,\beta,\gamma)$ is not a local minimum of $f$.
\endproof

\subsection{Local minima in all moduli space}

In \cite{hitchin:1992}, it was observed that the Hitchin function is additive with respect to direct sum of Higgs bundles. In our case this means that $$f(\bigoplus(V_i,\Omega_{V_i},W_i,\Omega_{W_i},\beta_i,\gamma_i))=\sum f(V_i,\Omega_{V_i},W_i,\Omega_{W_i},\beta_i,\gamma_i).$$
Hence, the following proposition is immediate from the previous proposition and from Proposition \ref{prop:description of stable and non-simple}.

\begin{proposition}\label{min for stable and not simple}
A stable $\Sp(2p,2q)$-Higgs bundle $(V,\Omega_V,W,\Omega_W,\beta,\gamma)$ represents a local minimum of $f$ if and only if $\beta=\gamma=0$.
\end{proposition}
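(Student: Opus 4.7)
The ``if'' direction is immediate: from the definition \eqref{eq:Hitfunc}, $f\geq 0$ and vanishes exactly when $\beta=\gamma=0$, so such points are automatically absolute minima. For the ``only if'' direction, the strategy is to reduce the stable non-simple case to the stable simple case, which was already settled in Proposition \ref{min}.

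If $(V,\Omega_V,W,\Omega_W,\beta,\gamma)$ is stable and simple, Proposition \ref{min} applies directly. If it is stable but not simple, Proposition \ref{prop:description of stable and non-simple} furnishes a decomposition
\[
(V,\Omega_V,W,\Omega_W,\beta,\gamma)\;\cong\;\bigoplus_{i=1}^{r}(V_i,\Omega_{V_i},W_i,\Omega_{W_i},\beta_i,\gamma_i),
\]
where each summand is a stable and simple $\Sp(2p_i,2q_i)$-Higgs bundle. Because $|\cdot|^2$ on the fibres of $\Hom(W,V)\otimes K$ and $\Hom(V,W)\otimes K$ restricted to the block-diagonal metric coming from the harmonic metrics on the summands splits orthogonally, the Hitchin function is additive:
\[
f\Bigl(\bigoplus_{i=1}^{r}(V_i,\Omega_{V_i},W_i,\Omega_{W_i},\beta_i,\gamma_i)\Bigr)=\sum_{i=1}^{r}f(V_i,\Omega_{V_i},W_i,\Omega_{W_i},\beta_i,\gamma_i).
\]

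The key remaining observation is that the natural map
\[
\prod_{i=1}^{r}\M_{\Sp(2p_i,2q_i)}\longrightarrow \M_{\Sp(2p,2q)},\qquad (E_1,\ldots,E_r)\longmapsto\bigoplus_{i=1}^{r}E_i,
\]
is continuous and pulls $f$ back to the sum of the Hitchin functions on the factors; moreover direct sums of stable (in particular polystable) objects are polystable, so the image does lie in $\M_{\Sp(2p,2q)}$. Consequently, independently deforming any single summand $(V_i,\Omega_{V_i},W_i,\Omega_{W_i},\beta_i,\gamma_i)$ inside its own moduli space, while keeping the other summands fixed, produces a legitimate deformation of $(V,\Omega_V,W,\Omega_W,\beta,\gamma)$ in $\M_{\Sp(2p,2q)}$ along which $f$ varies by exactly the variation of $f$ on that factor. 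Therefore, if $(V,\Omega_V,W,\Omega_W,\beta,\gamma)$ is a local minimum of $f$, each summand must be a local minimum of the Hitchin function on the corresponding $\M_{\Sp(2p_i,2q_i)}$. Since each summand is stable and simple, Proposition \ref{min} forces $\beta_i=\gamma_i=0$ for every $i$, hence $\beta=\gamma=0$. The main (in fact only) point requiring care is the additivity-plus-independent-deformation argument above; everything else is a direct appeal to the previously established statements.
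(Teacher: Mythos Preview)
Your proof is correct and follows essentially the same approach as the paper: additivity of the Hitchin function with respect to the direct-sum decomposition of Proposition~\ref{prop:description of stable and non-simple}, together with Proposition~\ref{min} applied to each stable simple summand. The paper's argument is in fact terser than yours---it simply declares the result immediate from additivity and the two previous propositions---so your explicit discussion of the continuous direct-sum map and the independent-deformation reasoning is a helpful elaboration of what the paper leaves implicit.
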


In order to have a description of the subvariety of local minima of $f$, it remains to deal with the strictly polystable $\Sp(2p,2q)$-Higgs bundles.

\begin{theorem}\label{min for polystable}
A polystable $\Sp(2p,2q)$-Higgs bundle $(V,\Omega_V,W,\Omega_W,\beta,\gamma)$ represents a local minimum if and
only if $\beta=\gamma=0$.
\end{theorem}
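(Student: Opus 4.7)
The easy direction is trivial: if $\beta=\gamma=0$ then $f$ vanishes identically on the object and the point is a global minimum. For the converse, I would use Theorem~\ref{thm:description of polystable} to write
$$(V,\Omega_V,W,\Omega_W,\beta,\gamma)=\bigoplus_{\alpha}(V_\alpha,\Omega_{V_\alpha},W_\alpha,\Omega_{W_\alpha},\beta_\alpha,\gamma_\alpha)$$
as a direct sum of stable Higgs bundles for subgroups of $\Sp(2p,2q)$ of one of the four types $\Sp(2p_\alpha,2q_\alpha)$, $\U(p_\alpha,q_\alpha)$, $\Sp(2n_\alpha)$ or $\U(n_\alpha)$. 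The summands for the compact groups carry zero Higgs field by definition, and since $f$ is additive on direct sums, if the total Higgs field is non-zero then some non-compact summand must have $\beta_\alpha\neq 0$.

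If that summand is of $\Sp(2p_\alpha,2q_\alpha)$-type, Proposition~\ref{min for stable and not simple} applied inside $\M_{\Sp(2p_\alpha,2q_\alpha)}$ produces an internal deformation of that summand strictly decreasing $f$; summed trivially with the remaining summands it gives a deformation in $\M_{\Sp(2p,2q)}$ strictly decreasing $f$, so the central object is not a local minimum. The genuinely new case, announced in the introduction, is when every summand with $\beta_\alpha\neq 0$ is of $\U(p_\alpha,q_\alpha)$-type, because local minima of the Hitchin function on $\M_{\U(p_\alpha,q_\alpha)}$ with non-zero Higgs field exist (Hodge bundles), so no intrinsic argument works.

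To handle that case I would build an explicit deformation that exits $\M_{\U(p_\alpha,q_\alpha)}$. A $\U(p',q')$-Higgs bundle $(V',W',\beta',\gamma')$ sits inside $\Sp(2p',2q')$ as $(V'\oplus V'^{*},W'\oplus W'^{*},\beta'-\gamma'^{t},\gamma'-\beta'^{t})$ with $V'$ and $W'$ Lagrangian and $\beta$-invariant; these are precisely the subbundles forcing strict polystability in Theorem~\ref{thm:orthogonal-stability}. I would construct a family $(V_t,\Omega_{V_t},W_t,\Omega_{W_t},\beta_t,\gamma_t)$ of $\Sp(2p_\alpha,2q_\alpha)$-Higgs bundles starting at this object by taking non-split symmetric extensions
$$0\to V'\to V_t\to V'^{*}\to 0,\qquad 0\to W'\to W_t\to W'^{*}\to 0,$$
whose extension classes lie in the symmetric parts of $H^1(X,V'\otimes V')$ and $H^1(X,W'\otimes W')$ (non-zero by Riemann--Roch for $g\geq 2$), together with a compatible deformation of the Higgs pair chosen so that $V'$ and $W'$ are no longer $\beta_t$-invariant. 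Theorem~\ref{thm:orthogonal-stability} then forces $(V_t,\dots,\gamma_t)$ to be stable for $t\neq 0$. Taking the direct sum with the remaining summands places stable $\Sp(2p,2q)$-Higgs bundles with $\beta_t\neq 0$ in every neighbourhood of the central object, and Proposition~\ref{min for stable and not simple} says that no such stable nearby point is a local minimum of $f$. Flowing downward along $-\nabla f$ from one of these stable points — a step controlled by the properness of $f$ (Proposition~\ref{proper}) — and then letting $t\to 0$ exhibits points in every neighbourhood of the central object with strictly smaller $f$-value, contradicting the local-minimum hypothesis.

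The main obstacle lies entirely in the $\U(p_\alpha,q_\alpha)$-step: (i) matching the perturbation of the Higgs field to the symplectic-extension deformation so that the offending Lagrangian sub-Higgs bundles actually cease to be $\beta_t$-invariant, thereby triggering stability via Theorem~\ref{thm:orthogonal-stability}; and (ii) making the limiting gradient-flow argument rigorous, so that the non-minimality property at stable $t\neq 0$ is genuinely transferred to the strictly polystable central point $t=0$.
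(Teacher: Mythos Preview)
Your overall architecture matches the paper's exactly: decompose via Theorem~\ref{thm:description of polystable}, dispatch the compact and $\Sp(2p_\alpha,2q_\alpha)$ summands, and for a $\U(p_\alpha,q_\alpha)$ summand deform the associated strictly polystable $\Sp$-object to a stable one via non-split symplectic extensions, then invoke Proposition~\ref{min for stable and not simple}. However, the execution of the $\U(p_\alpha,q_\alpha)$ step differs from the paper's in ways that matter.

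First, the paper inserts a reduction you omit: if the $\U(p_\alpha,q_\alpha)$ summand is \emph{not} itself a local minimum of the Hitchin function on $\M_{\U(p_\alpha,q_\alpha)}$, one deforms inside $\M_{\U(p_\alpha,q_\alpha)}$ to decrease $f$ and is done. So one may assume it \emph{is} such a minimum, and then by \cite[Theorem~4.6]{bradlow-garcia-prada-gothen:2003} one has (say) $\gamma'=0$ and hence $\deg V'=d'<0$, $\deg W'=-d'>0$. This sign information is essential for what follows.

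Second, with $d'<0$ in hand, the paper takes the extensions \emph{asymmetrically}: $0\to V'\to F\to V'^{*}\to 0$ with class in $H^1(S^2V')$, but $0\to W'^{*}\to G\to W'\to 0$ with class in $H^1(S^2W'^{*})$. Both $H^1$ groups are then non-zero by Riemann--Roch because $\deg V'<0$ and $\deg W'^{*}<0$. Your symmetric choice $0\to W'\to W_t\to W'^{*}\to 0$ needs $H^1(S^2W')\neq 0$, which Riemann--Roch does not guarantee when $\deg W'>0$.

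Third, the paper does \emph{not} deform the Higgs field at all: it sets $\tilde\beta=i_{V'}\beta' p_{W'}$ and proves stability of $(F,\Omega_F,G,\Omega_G,\tilde\beta,\tilde\gamma)$ by a direct check of all isotropic $\tilde\beta$-invariant pairs $(A,B)\subset(F,G)$, using the stability of $(V',W',\beta',0)$ and of its dual, together with the non-triviality of the two extensions (this is Lemma~\ref{lemma:deformation}). In particular $V'\subset F$ and $W'^{*}\subset G$ \emph{remain} $\tilde\beta$-invariant Lagrangians, but now $\deg V'+\deg W'^{*}=2d'<0$, so they do not obstruct stability. Your plan to perturb the Higgs field so that ``$V'$ and $W'$ are no longer $\beta_t$-invariant'' is therefore unnecessary, and as you yourself flag in obstacle~(i), it is also the part you do not know how to carry out; the paper's asymmetric extension choice is precisely what dissolves this obstacle.

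Your acknowledged gap~(ii) --- transferring non-minimality from the stable $t\neq 0$ objects to the strictly polystable $t=0$ object --- is handled no more rigorously in the paper than in your sketch.
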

\proof
From Theorem \ref{thm:description of polystable} we know that a polystable minima of $f$ decomposes as a direct sum of stable $G_i$-Higgs bundles where $G_\alpha=\Sp(2p_\alpha,2q_\alpha),\,\Sp(2n_\alpha),\,\U(p_\alpha,q_\alpha)$ or $\U(n_\alpha)$, with $p_\alpha\leq p,\, q_\alpha\leq q,\, n_\alpha\leq p+q$. For the compact groups $\Sp(2n_\alpha)$ or $\U(n_\alpha)$ it is clear that the local minima of $f$ on the corresponding lower rank moduli spaces must have zero Higgs field. For stable $\Sp(2p_\alpha,2q_\alpha)$-Higgs bundles, we apply Proposition \ref{min for stable and not simple} to obtain the same conclusion.
The case of $\U(p_\alpha,q_\alpha)$ is, however, more complicated, due to the fact that
the stable $\U(p_\alpha,q_\alpha)$-Higgs bundles $(V',W',\beta',\gamma')$ which are
local minima of the Hitchin function in the moduli space $\M_{\U(p_\alpha,q_\alpha)}$
have only $\beta'=0$ or $\gamma'=0$ --- both of them are zero only when the
degrees of $V'$ and $W'$ are zero ---
(see \cite[Theorem 4.6]{bradlow-garcia-prada-gothen:2003}). Suppose, without loss of generality, that $\gamma'=0$. Then, from \cite[Proposition 4.8]{bradlow-garcia-prada-gothen:2003}, we must have $\deg(V')\leq 0$ (also, $\deg(W')=-\deg(V')>0$ --- this follows from the description of the $\U(p_\alpha,q_\alpha)$ which may occur in Theorem \ref{thm:description of polystable}).  Then $(V',W',\beta',0)$ is a stable local minima of $f$ in  $\M_{\U(p_\alpha,q_\alpha)}$, so the corresponding strictly polystable $\Sp(2p_\alpha,2q_\alpha)$-Higgs bundle 
\begin{equation}\label{eq:Sp from U}
(V'\oplus V'^*,\langle\, , \rangle_{V'},W'\oplus W'^*,\langle\, , \rangle_{W'},\beta',\beta'^t)
\end{equation} is a potential local minima of $f$ in $\M_{\Sp(2p_\alpha,2q_\alpha)}$ without $\beta'=0$.
However, in Lemma \ref{lemma:deformation} below, we show that such $\Sp(2p_\alpha,2q_\alpha)$-Higgs bundle can always be continuously deformed into a stable $\Sp(2p_\alpha,2q_\alpha)$-Higgs bundle, which together with Proposition \ref{min for stable and not simple} proves that \eqref{eq:Sp from U} is not a local minimum of $f$. This completes the proof.
\endproof

\begin{lemma}\label{lemma:deformation}
Let $(V',W',\beta',0)$ be a stable $\U(p',q')$-Higgs bundle. Then the corresponding strictly polystable $\Sp(2p',2q')$-Higgs bundle $(V'\oplus V'^*,\langle\, ,\rangle_V,W'\oplus W'^*,\langle\, ,\rangle_W,\beta',-\beta'^t)$ can be deformed to a stable $\Sp(2p',2q')$-Higgs bundle. 
\end{lemma}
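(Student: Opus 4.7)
The plan is to construct an explicit one-parameter family $\{(V,\Omega_V,W,\Omega_W,\beta_t,\gamma_t)\}_{t\in\CC}$ of $\Sp(2p',2q')$-Higgs bundles on the fixed underlying symplectic bundles $V:=V'\oplus V'^*$ and $W:=W'\oplus W'^*$, with $\beta_0$ equal to the given Higgs field and with $\beta_t$ defining a stable $\Sp(2p',2q')$-Higgs bundle for all sufficiently small $t\neq 0$. Only the Higgs field is deformed; the symplectic vector bundles and their forms are kept unchanged.

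In the block form adapted to the Lagrangian decompositions $V=V'\oplus V'^*$ and $W=W'\oplus W'^*$, the original Higgs field reads
$$\beta_0=\begin{pmatrix}\beta' & 0\\ 0 & 0\end{pmatrix}\colon W\longrightarrow V\otimes K.$$
The only isotropic pairs preventing stability at $t=0$ are $(V',W')$ and $(V'^*,W'^*)$: both are Lagrangian and $\beta_0$-invariant, and each has degree sum $\deg V'+\deg W'=0$ by the identity $\deg W'=-\deg V'$ established in the proof of Theorem \ref{min for polystable}. The idea is to break both pairs simultaneously by an off-diagonal deformation. Pick non-zero sections $\phi\in H^0(\Hom(W'^*,V')\otimes K)$ and $\psi\in H^0(\Hom(W',V'^*)\otimes K)$ --- if necessary after a preliminary small deformation of $(V',W',\beta',0)$ inside $\M_{\U(p',q')}$ so that the relevant cohomology groups are non-zero --- and define
$$\beta_t:=\beta_0+t\begin{pmatrix}0 & \phi\\ \psi & 0\end{pmatrix},$$
with $\gamma_t$ determined by the $\Sp$-compatibility condition \eqref{eq:condbetagamma}. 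For $t\neq 0$, $\beta_t(W')=\beta'(W')+t\psi(W')$ has a non-zero $V'^*\otimes K$-component, and $\beta_t(W'^*)=t\phi(W'^*)$ has a non-zero $V'\otimes K$-component, so neither $(V',W')$ nor $(V'^*,W'^*)$ remains $\beta_t$-invariant; the two obstructions to stability are thereby eliminated.

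The hard part will be showing that no new destabilizing isotropic pair emerges for small $t\neq 0$. This is handled by a semicontinuity and specialization argument: the locus
$$\Sigma:=\{t\in\CC\,\mid\,\text{some isotropic pair }(V'',W'')\text{ satisfies }\beta_t(W'')\subset V''\otimes K\text{ and }\deg V''+\deg W''\geq 0\}$$
is closed in $\CC$, cut out by the image of a Quot-type parameter scheme of isotropic sub-pairs together with a $\beta_t$-invariance condition. Any sequence $t_n\to 0$ in $\Sigma$ yields, after passing to a subsequence and taking flat limits, a destabilizing pair for $\beta_0$. By semistability of $(V\oplus V^*, W\oplus W^*, \beta_0, \gamma_0)$ such a pair must have degree sum exactly zero, and hence must be either $(V',W')$, $(V'^*, W'^*)$, or a proper $\beta'$-invariant sub-pair of $(V', W')$; the former two are broken by construction, and the latter are excluded by stability of the underlying $\U(p',q')$-Higgs bundle $(V',W',\beta',0)$. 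Hence $0\in\CC$ is isolated in $\Sigma$, so a small punctured disk around $0$ lies entirely in the stable locus. Together with Proposition \ref{min for stable and not simple}, this yields the desired deformation and completes the proof.
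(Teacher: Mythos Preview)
Your approach---deforming only the Higgs field while keeping the split symplectic bundles $V'\oplus V'^*$ and $W'\oplus W'^*$ fixed---is genuinely different from the paper's, which instead deforms the underlying bundles to non-split symplectic extensions and leaves the shape of the Higgs field essentially unchanged. However, as written your argument has a real gap in the specialisation step.

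You correctly observe that a flat limit of destabilising isotropic pairs for $\beta_{t_n}$ yields a degree-zero isotropic $\beta_0$-invariant pair $(A_0,B_0)$. But from there you try to derive a contradiction by saying that the only such pairs are $(V',W')$, $(V'^*,W'^*)$, or proper $\beta'$-invariant sub-pairs, and that the first two ``are broken by construction''. Neither step goes through. First, you have not classified the degree-zero isotropic $\beta_0$-invariant pairs: an isotropic subbundle of $V'\oplus V'^*$ need not split as a direct sum of subbundles of $V'$ and $V'^*$, and the analysis of which non-split subbundles can occur is not carried out. Second, and more seriously, even granting that classification there is no contradiction: the fact that $(V',W')$ itself is not $\beta_t$-invariant for $t\neq 0$ does not prevent it from being the flat \emph{limit} of a family of pairs $(A_t,B_t)$, varying with $t$, each of which \emph{is} $\beta_t$-invariant. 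Ruling this out would require a local analysis near $(V',W')$ and $(V'^*,W'^*)$ in the relevant Quot-type scheme, showing that no nearby isotropic pair of the same degree can be $\beta_t$-invariant for small $t\neq 0$; this is precisely where the substance of the argument should be, and it is missing. A secondary issue is that the existence of non-zero $\phi\in H^0(\Hom(W'^*,V')\otimes K)$ and $\psi\in H^0(\Hom(W',V'^*)\otimes K)$ is not established, and the hedge ``after a preliminary small deformation'' is neither made precise nor shown to yield bundles with the required non-vanishing $H^0$.

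The paper sidesteps both difficulties. The extension classes needed to deform the bundles lie in $H^1(S^2V')$ and $H^1(S^2W'^*)$, which are non-zero by a direct Riemann--Roch count (using $\deg V'<0$ and $g\ge 2$); and stability of the deformed object $(F,\Omega_F,G,\Omega_G,\tilde\beta,\tilde\gamma)$ is verified \emph{directly}, by showing that any would-be destabilising isotropic pair forces either a destabilising sub-pair of the stable $\U(p',q')$-Higgs bundle $(V',W',\beta',0)$ (or of its dual), or a holomorphic splitting of one of the non-trivial extensions defining $F$ or $G$. No limiting or semicontinuity argument is needed.
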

\proof
Since the part in $H^0(\Hom(V',W')\otimes K))$ of the $\U(p',q')$-Higgs bundle $(V',W',\beta',0)$ is zero then, by \cite[Proposition 4.8]{bradlow-garcia-prada-gothen:2003}, the degree of $V'$ must be negative. Write $\deg(V')=d'<0$, so that $\deg(W')=-d'>0$.
Write also, $\beta=\beta'$ and $\gamma=-\beta'^t$, so that the corresponding $\Sp(2p',2q')$-Higgs bundle is $$(V'\oplus V'^*,\langle\, ,\rangle_V,W'\oplus W'^*,\langle\, ,\rangle_W,\beta,\gamma)$$ which is of course strictly polystable. In order to deform it, we will first make use of non-trivial extensions of $V'^*$ by $V'$ and of $W'$ by $W'^*$.
These are of course parametrized by non-zero elements of $H^1(V'\otimes V')$ and $H^1(W'^*\otimes W'^*)$. However, since we want the vector bundles given by these extensions to carry symplectic forms, we must (cf. \cite[Criterion 2.1]{hitching:2007}) consider only extensions parametrized by non-zero elements of $H^1(S^2V')$ and $H^1(S^2W'^*)$. Since $d'\leq 0$ and $g\geq 2$, it follows from Riemann-Roch that $\dim H^1(S^2V')>0$ and $\dim H^1(S^2W'^*)>0$. So we can choose any non-zero elements 
\begin{equation}\label{eq:extension classes}
\eta_V\in H^1(S^2V'),\ \eta_W\in H^1(S^2W'^*),
\end{equation} defining then symplectic vector bundles $(F,\Omega_F)$ and $(G,\Omega_G)$ such that
$$0\longrightarrow V'\stackrel{\ i_{V'}\ }{\longrightarrow} F\stackrel{\ p_{V'}\ }{\longrightarrow} V'^*\longrightarrow 0\hspace{.5cm}\text{ and }\hspace{.5cm}0\longrightarrow W'^*\stackrel{\ i_{W'}\ }{\longrightarrow}  G\stackrel{\ p_{W'}\ }{\longrightarrow} W'\longrightarrow 0.$$
Notice that in this way, $V'$ and $W'^*$ are Lagrangian subbundles of $F$ and $G$ respectively. Moreover, if $\omega^F:F\to F^*$ and $\omega^G:G\to G^*$ are the isomorphisms coming from $\Omega_F$ and $\Omega_G$, then
\begin{equation}\label{eq:extensions and symplectic forms}
p_{W'}^t=\omega^Gi_{W'},\hspace{.5cm}p_{V'}=i_{V'}^t\omega^F.
\end{equation}
Define now the $\Sp(2p',2q')$-Higgs bundle $$(F,\Omega_F,G,\Omega_G,\tilde\beta,\tilde\gamma)$$ where $$\tilde\beta=i_{V'}\beta p_{W'}, \ \tilde\gamma=i_{W'}\gamma p_{V'}.$$ From \eqref{eq:extensions and symplectic forms}, this is equivalent to $$\tilde\gamma=-(\omega^G)^{-1}\tilde\beta^t\omega^F$$ as wanted.

We have to prove that $(F,\Omega_F,G,\Omega_G,\tilde\beta,\tilde\gamma)$ is stable. So, let $A\subset F$ and $B\subset G$ be any isotropic subbundles such that 
$$\tilde\beta(B)\subset A\otimes K,$$ 
which is equivalent to
$$\tilde\gamma(A)\subset B\otimes K.$$
Since semistability is an open condition, then $(F,\Omega_F,G,\Omega_G,\tilde\beta,\tilde\gamma)$ is semistable, so $\deg(A)+\deg(B)\leq 0$. We want to prove that 
\begin{equation}\label{eq:degA+degB<0}
\deg(A)+\deg(B)<0.
\end{equation}

Let $A'$ be the kernel of $p_{V'}$ restricted to $A$. It is a vector subbundle of $A$ (recall that we are on a Riemann surface). Let $A''=A/A'$ be the quotient bundle. Define similarly $B'\subset B$ and $B''=B/B'$.
So we have the following commutative diagrams (where to simplify we are not including $K$)
$$
\xymatrix{
&0\ar[r]&A'\ar@{^{(}->}[d]\ar[r]^{i_{V'}|_{A'}}&A\ar@{^{(}->}[d]\ar[r]^{p_{V'}|_A}&A''\ar@{^{(}->}[d]\ar[r]&0\\
&0\ar[r]&V'\ar[r]^{i_{V'}}&F\ar[r]^{p_{V'}}\ar@/^-.5pc/_{\tilde\gamma}[d]&V'^*\ar[r]&0\\
&0\ar[r]&W'^*\ar[r]^{i_{W'}}&G\ar[r]^{p_{W'}}\ar@/^-.5pc/_{\tilde\beta}[u]&W'\ar[r]&0\\
&0\ar[r]&B'\ar@{^{(}->}[u]\ar[r]^{i_{W'}|_{B'}}&B\ar@{^{(}->}[u]\ar[r]^{p_{W'}|_{B}}&B''\ar@{^{(}->}[u]\ar[r]&0.}
$$
It is easy to see that 
\begin{equation}\label{eq:invariant subbundles}
\begin{split}
\tilde\beta(B)\subset A\otimes K &\Longleftrightarrow\beta(B'')\subset A'\otimes K,\\
\tilde\gamma(A)\subset B\otimes K &\Longleftrightarrow\gamma(A'')\subset B'\otimes K
\end{split}
\end{equation}
where we recall that, by definition, $\beta=\beta'\in H^0 (\Hom(W',V')\otimes K)$ is the map in the $\U(p',q')$-Higgs bundle $(V',W',\beta',0)$ we started with, and also by definition, $\gamma=-\beta'^t\in H^0 (\Hom(V'^*,W'^*)\otimes K)$ is the map on the dual $\U(p',q')$-Higgs bundle $(V'^*,W'^*,0,-\beta'^t)$.
Notice that since $(V',W',\beta',0)$ is stable then $(V'^*,W'^*,0,-\beta'^t)$ is stable as well.
Now, $$\deg(A)+\deg(B)=0\Longleftrightarrow\deg(A')+\deg(B'')=-(\deg(A'')+\deg(B')).$$
This together with \eqref{eq:invariant subbundles} contradicts the stability
of $(V',W',\beta',0)$ and $(V'^*,W'^*,0,-\beta'^t)$, unless $A'=V'$ and
$B''=W'$, or $A'=B''=0$. However, in this cases we will also reach a contradiction. Indeed, if $A'=V'$ and $B''=W'$, then we must have 
$$
\xymatrix{
&0\ar[r]&W'^*\ar[r]&G\ar[r]&W'\ar[r]&0\\
&0\ar[r]&B'\ar@{^{(}->}[u]\ar[r]&B\ar@{^{(}->}[u]\ar[r]&W'\ar@{=}[u]\ar[r]&0}
$$
and since $B$ is isotropic, its rank is at most $q'$, this being the rank of $W'$. It follows that $B\cong W'$. But this yields a splitting of the non-trivial extension defining $G$, a contradiction.
If instead we have $A'=B''=0$, then we must have $A''=V'^*$ and $B'=W'^*$, and along the same lines this will contradict the non-triviality of the extension defining $F$.
Therefore we must have $\deg(A)+\deg(B)< 0$, proving \eqref{eq:degA+degB<0} and showing that $(F,\Omega_F,G,\Omega_G,\tilde\beta,\tilde\gamma)$ is stable.

Finally, the required deformation is given just by taking $(F_t,\Omega_{F_t},G_t,\Omega_{G_t},\tilde\beta_t,\tilde\gamma_t)$, where $t$ is a parameter in the unit disc, and $(F_t,\Omega_{F_t})$ and $(G_t,\Omega_{G_t})$ are the symplectic vector bundles defined by classes $t\eta_V$ and $t\eta_W$, where $\eta_V,\eta_W$ are given in \eqref{eq:extension classes}, and $\tilde\beta_t,\tilde\gamma_t$ are defined in a similar manner as $\tilde\beta$ and $\tilde\gamma$ above.
\endproof

\section{Connected components of the space of $\Sp(2p,2q)$-Higgs bundles}\label{compM}

From Theorem \ref{min for polystable} we conclude that the subvariety $\N_{\Sp(2p,2q)}$ of local minima of the Hitchin function $f:\M_{\Sp(2p,2q)}\to\RR$ is isomorphic to the moduli space of $\Sp(2p+2q,\CC)$-principal bundles or, 
in the language of Higgs bundles, to the moduli space of $\Sp(2p+2q)$-Higgs bundles.

In \cite{ramanathan:1996a,ramanathan:1996b}, A. Ramanathan has shown that 
if $G$ is a connected reductive group then there is a bijective correspondence 
between $\pi_0$ of the moduli space of $G$-principal bundles  and $\pi_1G$. Hence, 
since $\Sp(2p+2q)$ is simply-connected, it follows therefore that 
the same is true for $\N_{\Sp(2p,2q)}$. So, using Proposition \ref{proper}, we can state our result.

\begin{theorem}\label{thm:main}
 Let $X$ be a compact Riemann surface of genus $g\geq 2$ and let $\M_{\Sp(2p,2q)}$ be the moduli space of $\Sp(2p,2q)$-Higgs bundles. Then $\M_{\Sp(2p,2q)}$ is connected.
\end{theorem}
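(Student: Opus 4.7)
The proof is essentially the synthesis of all the earlier preparation. The plan is to combine the properness of the Hitchin function with the sharp characterization of its local minima and then invoke Ramanathan's theorem on $\pi_0$ of moduli of principal bundles.

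First I would apply Proposition \ref{proper}: to prove that $\M_{\Sp(2p,2q)}$ is connected, it suffices to show that the subvariety $\N_{\Sp(2p,2q)}\subset\M_{\Sp(2p,2q)}$ of local minima of the Hitchin function $f$ is connected. Next I would invoke Theorem \ref{min for polystable}, which tells us that a polystable $\Sp(2p,2q)$-Higgs bundle $(V,\Omega_V,W,\Omega_W,\beta,\gamma)$ represents a local minimum of $f$ if and only if $\beta=\gamma=0$. Consequently, forgetting the (trivial) Higgs field gives an identification of $\N_{\Sp(2p,2q)}$ with the moduli space of polystable principal $H^\CC=\Sp(2p,\CC)\times\Sp(2q,\CC)$-bundles over $X$, or equivalently (via the Narasimhan--Seshadri--Ramanathan correspondence) with the moduli space of principal $H$-bundles, where $H=\Sp(2p)\times\Sp(2q)$ is the maximal compact subgroup of $\Sp(2p,2q)$.

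Finally I would invoke Ramanathan's result \cite{ramanathan:1996a,ramanathan:1996b} that for a connected reductive group $G$ there is a natural bijection between $\pi_0$ of the moduli space of polystable $G$-bundles and $\pi_1 G$. Since both $\Sp(2p)$ and $\Sp(2q)$ are simply connected, so is the product $H=\Sp(2p)\times\Sp(2q)$, and hence $\N_{\Sp(2p,2q)}$ has a single connected component. Combining with the first step, $\M_{\Sp(2p,2q)}$ is connected.

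There is really no major obstacle at this stage: all the substantive work has been done in the earlier sections — the stability and deformation theory in Section \ref{section:Sp(2p,2q)-Higgs bundles}, the decomposition of polystable objects in Theorem \ref{thm:description of polystable}, the Morse-theoretic criterion for smooth minima in Theorem \ref{ad} and Proposition \ref{min}, and most delicately the deformation argument of Lemma \ref{lemma:deformation} which rules out the strictly polystable $\U(p_\alpha,q_\alpha)$-type minima with nonzero Higgs field. Once Theorem \ref{min for polystable} is established, the final theorem is a one-line application of properness together with simple-connectedness of the compact symplectic groups.
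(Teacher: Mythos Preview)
Your proposal is correct and follows essentially the same route as the paper: reduce to connectedness of the minima via Proposition~\ref{proper}, identify the minima as Higgs bundles with $\beta=\gamma=0$ via Theorem~\ref{min for polystable}, and then apply Ramanathan's result using simple-connectedness of the compact symplectic groups. The only cosmetic difference is that the paper phrases the minima as the moduli of $\Sp(2p+2q,\CC)$-principal bundles, whereas you (arguably more precisely) identify it with the moduli of $H^\CC=\Sp(2p,\CC)\times\Sp(2q,\CC)$-bundles; either way the relevant fundamental group is trivial and the conclusion is the same.
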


Regarding the special case of $\Sp(2p,2p)$, our Theorem \ref{thm:main} proves in particular Conjecture 9.1 of \cite{schaposnik:2012}.


\begin{thebibliography}{99}

\bibitem{biswas-ramanan:1994}
{I. Biswas, S. Ramanan}, An infinitesimal study of the
moduli of Hitchin pairs, \textsl{J. London Math.} Soc. (2) \textbf{49}
(1994), 219--231.

\bibitem{bradlow-garcia-prada-gothen:2003}
{S. B. Bradlow, O. García-Prada, P. B. Gothen}, Surface
group representations and $\U(p,q)$-Higgs bundles, \textsl{J. Diff. Geom.}
\textbf{64} (2003), 111--170.

\bibitem{bradlow-garcia-prada-gothen:2005}
{S. B. Bradlow, O. García-Prada, P. B. Gothen}, Maximal
surface group representations in isometry groups of classical
Hermitian symmetric spaces, \textsl{Geometriae Dedicata} \textbf{122} (2006), 185--213.

\bibitem{bradlow-garcia-prada-gothen:2008}
{S. B. Bradlow, O. García-Prada, P. B. Gothen}, Homotopy groups of moduli spaces of representations, \textsl{Topology} \textbf{47} (2008), 203--224.

\bibitem{bradlow-garcia-prada-mundet:2003}
{S. B. Bradlow, O. García-Prada, I. Mundet i Riera},
Relative Hitchin-Kobayashi correspondences for principal
pairs, \textsl{Quart. J. Math.} \textbf{54} (2003), 171--208.

\bibitem{corlette:1988}
{K. Corlette}, Flat $G$-bundles with canonical metrics,
\textsl{J. Diff. Geom.} \textbf{28} (1988), 361--382.

\bibitem{donaldson:1987}
{S. K. Donaldson}, Twisted harmonic maps and self-duality
equations, \textsl{Proc. London Math. Soc.} (3) \textbf{55} (1987),
127--131.

\bibitem{garcia-gothen-mundet:2008}
{O. García-Prada, P. B. Gothen, I. Mundet i Riera}, The Hitchin-Kobayashi correspondence, Higgs pairs and surface group representations, Preprint arXiv:0909.4487v3.

\bibitem{garcia-gothen-mundet:2008 II}
{O. García-Prada, P. B. Gothen, I. Mundet i Riera}, Higgs bundles and surface
group representations in the real symplectic group, 
\textsl{J. Topology}, to appear.

\bibitem{garcia-oliveira:2012}
{O. García-Prada, A. G. Oliveira}, Higgs bundles for the non-compact dual of the unitary group, \textsl{Illinois J. Math.}, to appear.

\bibitem{hitchin:1987}
{N. J. Hitchin}, The self-duality equations on a Riemann
surface, \textsl{Proc. London Math. Soc.} (3) \textbf{55} (1987), 59--126.

\bibitem{hitchin:1992}
{N. J. Hitchin}, Lie groups and Teichm\"{u}ller space,
\textsl{Topology} \textbf{31} (1992), 449--473.

\bibitem{hitching:2005}
{G. H. Hitching}, \emph{Moduli of Symplectic Bundles over Curves}, Doctoral Dissertation, Department of Mathematical Sciences, University of Durham, 2005.

\bibitem{hitching:2007}
{G. H. Hitching}, Subbundles of symplectic and orthogonal vector bundles over curves, \textsl{Math. Nachr.} \textbf{280}, No. 13-14, (2007), 1510--1517.

\bibitem{ramanathan:1975}
{A. Ramanathan}, Stable principal bundles on a compact
Riemann surface, \textsl{Math. Ann.} \textbf{213} (1975), 129--152.

\bibitem{ramanathan:1996a}
{A. Ramanathan}, Moduli for principal bundles over algebraic
curves: I, \textsl{Proc. Indian Acad. Sci. (Math. Sci.)} \textbf{106}
(1996), 301--328.

\bibitem{ramanathan:1996b}
{A. Ramanathan}, Moduli for principal bundles over algebraic
curves: II, \textsl{Proc. Indian Acad. Sci. (Math. Sci.)} \textbf{106}
(1996), 421--449.

\bibitem{schaposnik:2012}
{L. P. Schaposnik}, \textsl{Spectral data for G-Higgs bundles},
D. Phil. Thesis, University of Oxford, 2012. Preprint arXiv:1301.1981.

\bibitem{schmitt:2008}
{A. H. W. Schmitt}, \textsl{Geometric Invariant Theory and Decorated Principal Bundles},
Zurich Lectures in Advanced Mathematics, European Mathematical Society, 2008.

\bibitem{simpson:1988}
{C. T. Simpson}, Constructing variations of Hodge structure using
  Yang-Mills theory and applications to uniformization,
\textsl{J. Amer. Math. Soc.} \textbf{1} (1988), 867--918.

\bibitem{simpson:1992}
{C. T. Simpson}, Higgs bundles and local systems, \textsl{Inst.
Hautes \'{E}tudes Sci. Publ. Math.} \textbf{75} (1992), 5--95.

\end{thebibliography}
\end{document}